\newtheorem{theorem}{Theorem}[section]
\newtheorem{corollary}[theorem]{Corollary}
\newtheorem{lemma}[theorem]{Lemma}
\newtheorem{remark}[theorem]{Remark}
\newcommand{\sP}{{s_{\text{PCA}}}}
\newcommand{\sD}{{s_{\text{DMD}}}}
\newcommand{\D}{\mathcal{D}}
\newcommand{\G}{\mathcal{G}}
\newtheorem{inverseproblem}{Inverse problem}
\newtheorem{exam}{Example}
\numberwithin{equation}{section}
\title[inverse random source problem]{A data-assisted two-stage method for the inverse random source problem}
\author{Peijun Li}
\address{Department of Mathematics, Purdue University, West Lafayette, Indiana 47907, USA}
\email{lipeijun@math.purdue.edu}
\author{Ying Liang}
\address{Department of Mathematics, Purdue University, West Lafayette, Indiana 47907, USA}
\email{liang402@purdue.edu}
\author{Yuliang Wang}
\address{Research Center for Mathematics, Beijing Normal University, Zhuhai 519087, China; BNU-HKBU United International College, Zhuhai 519807, China}
\email{yuliangwang@uic.edu.cn}
\thanks{The first author was supported in part by the NSF grant DMS-2208256.}
\subjclass[2010]{35R30, 35R60, 62M45, 78A46}
\keywords{inverse source problem, stochastic differential equations, the Helmholtz equation, neural network, deep learning}
\begin{document}

\maketitle

\begin{abstract}
We propose a data-assisted two-stage method for solving an inverse random source problem of the Helmholtz equation. In the first stage, the regularized Kaczmarz method is employed to generate initial approximations of the mean and variance based on the mild solution of the stochastic Helmholtz equation. A dataset is then obtained by sampling the approximate and corresponding true profiles from a certain a-priori criterion. The second stage is formulated as an image-to-image translation problem, and several data-assisted approaches are utilized to handle the dataset and obtain enhanced reconstructions. Numerical experiments demonstrate that the data-assisted two-stage method provides satisfactory reconstruction for both homogeneous and inhomogeneous media with fewer realizations.
\end{abstract}

\section{Introduction}

Inverse source problems are of great importance in many fields, such as antenna synthesis, medical imaging, and earthquake monitoring. These problems involve determining the unknown sources that generate prescribed radiated wave patterns. Over the past four decades, they have been extensively studied. However, due to the presence of nonradiating sources, inverse source problems generally do not have unique solutions when only boundary measurements at a fixed frequency are used \cite{bleistein1977nonuniqueness, devaney1982nonuniqueness}. 
 In order to tackle the difficulty of these issues, attempts have been made to find the least energy solution \cite{marengo1999inverse}. The use of multifrequency data has been employed to guarantee uniqueness and improve the stability of the problem \cite{bao2015multi}.

In many applications, it is desirable to take into account the uncertainties that may arise from the unpredictability of the surrounding environment, incomplete information of the system, or random noise of the measurements. These inverse problems involving such uncertainties are called stochastic inverse problems. A number of theoretical studies for stochastic inverse scattering problems can be found in \cite{fouque2007wave,helin2017inverse,badieirostami2010wiener}. The inverse random source problem seeks to identify the mean and variance of random sources from the data they produce. In \cite{bao2014inverse}, a computational framework was proposed for the one-dimensional problem by using the inverse sine or cosine transform to reconstruct the mean and variance of the random source. For higher dimensional problems, the regularized block Kaczmarz and eigenfunction-based methods were developed in \cite{bao2016inverse} and \cite{li2017inverse} for the inverse source problem in a homogeneous and inhomogeneous medium, respectively. In \cite{li2017stability}, a stability analysis was carried out for the inverse random source problem of the stochastic Helmholtz equation driven by the white noise. Uniqueness was examined in \cite{li2020inverse, li2022inverse} for the inverse source problem where the random source was modeled as a generalized microlocally isotropic Gaussian random field. The existing numerical methods for the inverse random source problem have several drawbacks: they require a large number of realizations of the stochastic direct problem; the mean and variance of the source must be continuous for satisfactory reconstruction; the medium is assumed to be either homogeneous or inhomogeneous but known. In practice, the inhomogeneous medium may be unknown, which limits the feasibility of existing numerical methods. Moreover, the accuracy of the reconstruction is affected by the number of realizations of the stochastic direct problem and the continuity of the mean and variance of the source.

Recently, machine learning techniques have been successfully applied to a wide range of scientific areas, particularly for solving classification and segmentation problems. Motivated by these successes, machine learning algorithms are being applied to solve inverse problems modeled by parametric partial differential equations (PDEs). We briefly introduce two common existing strategies based on neural networks. The first strategy utilizes neural networks as surrogates for differential equations, such as the physics informed neural network (PINN) \cite{chen2020physics}, the Deep Ritz method \cite{yu2018deep}, and the weak adversarial neural network (WAN) \cite{zang2020weak}. Algorithms are developed to search for unknown parameters in the PDE model and to obtain numerical solutions over the approximation space consisting of neural networks. However, re-training the neural network for a different problem governed by the same PDE is quite expensive. The second main strategy utilizes the training dataset in the form of paired observations and parameters to create a model that can be used to make predictions. In such algorithms, the observations are treated as input and the corresponding exact parameters are treated as the output of a given neural network architecture. The neural network is trained to learn the mapping from the observations to the parameters, as demonstrated in \cite{adler2017solving,  zhu2018bayesian}. This strategy has been used for data-driven model discovery \cite{bongard2007automated, schmidt2009distilling} to reconstruct governing equations from observed data. Besides these two strategies, there is an emerging body of work on operator learning. In \cite{maarten2022deep}, the authors propose the operator recurrent neural network to directly learn the inverse operator between infinite-dimensional spaces. There has also been interest in learning regularizers for inversion, and a comprehensive account of this topic can be found in \cite{arridge2019solving}.

There have been a number of works on machine learning algorithms developed for solving inverse scattering problems. In \cite{khoo2019switchnet}, a neural network model with a SwitchNet architecture was proposed and trained to learn the direct and inverse maps between the scatterers and the scattered wave field. In \cite{uhlmann2020convolutional}, the authors incorporated the physical model of wave propagation into a deep neural network architecture to solve the inverse problem related to nonlinear wave equations. In \cite{gao2022artificial}, a fully connected neural network was trained to learn the mapping between the limited-aperture radar cross-section data and the Fourier coefficients of the unknown scatterers.

In this work, we propose a data-assisted two-stage method for solving the inverse random source problem. The goal is to achieve satisfactory reconstructions for discontinuous mean and variance of the random sources with fewer realizations, even in an unknown inhomogeneous medium. In the first stage, we utilize the regularized block Kaczmarz method to obtain initial approximations by using the mild solution of the stochastic Helmholtz equation in a homogeneous medium. In the second stage, several machine learning techniques are applied to learn the mapping from the approximations to the exact parameters, thus enabling more accurate predictions. In this way, the second stage is formulated as an image-to-image translation problem, which aims to transfer images between source and target domains while preserving content representations. Our approach has two salient features. Firstly, partial physical information is embedded in the approximation from the first stage, thus reducing the computational burden on the neural network in the second stage. Secondly, the approximation is of the same dimension as the discretized statistics of the source, allowing for the use of various advanced machine learning algorithms for image-to-image translation problems. Similar ideas have been adopted in \cite{zhou2020improved, xu2020deep, chen2020review} for deterministic inverse scattering problems and in \cite{guo2021construct} for diffusive optical tomography.

Machine learning algorithms have found wide application in solving image-to-image translation problems, including in computer vision, medical imaging, and photo enhancement. In our work, we carry out a comparative study on the performance of four different machine learning techniques in the second stage of the experiment. The first method utilizes principal component analysis (PCA) to reduce the dimensionality of the training data, followed by regression to produce the reconstructed statistics of the random source. The second method approximates the image-to-image translation problem as a linear dynamical system using the dynamic mode decomposition (DMD) approach \cite{schmid2010dynamic}. The other two methods are based on convolutional neural networks. The first approach utilizes the U-Net architecture \cite{ronneberger2015u}, a supervised learning approach originally designed for image segmentation. The second algorithm, pix2pix \cite{isola2017image}, is an advanced method for image-to-image translation based on a conditional generative adversarial neural network (cGAN). Numerical experiments have been conducted extensively to verify that these machine learning algorithms can effectively improve the reconstruction of the random source.

The paper is outlined as follows. In Section \ref{Sec:2}, the mathematical model for the inverse random source problem is introduced. In Section  \ref{Sec:3}, we present a reconstruction scheme for the inverse problem based on integral equations and regularized block Kaczmarz method, and provide an error analysis of the method. In Section  \ref{Sec:4}, we detail the two-stage approach, which involves obtaining an approximation of the statistics using the reconstruction scheme in the first stage and enhancing it with machine learning algorithms in the second stage. Finally, we present numerical examples to demonstrate the performance of the proposed algorithm in Section  \ref{Sec:5}.

\section{Problem Statement}\label{Sec:2}

Consider the two-dimensional Helmholtz equation driven by a random source 
\begin{equation}\label{eq:forward}
    \Delta u(x,\kappa) +\kappa^2 (1+\eta(x)) u(x,\kappa)  = f(x),\quad x\in \mathbb{R}^2,
\end{equation}
where the wavenumber $\kappa$ is a positive constant, $\eta$ is a deterministic function representing the relative permittivity of the medium, and the random source $f$ is assumed to be given by  
\[
f(x) = g(x) +h(x) \dot{W}_x
\]
with $g$ and $h \geq 0$ being two real-valued functions and $\dot{W}_x$ being the spatial white noise. 
Additionally, we assume that $g$ and $h$  are compactly supported in a rectangular domain $D\subset \mathbb{R}^2$, where $\overline{D} \subset B_R = \{x\in\mathbb R^2: |x|<R\}$ for some $R>0$, and $\eta$ is compactly supported in \( B_R \). As usual, the scattered field satisfies the Sommerfeld radiation condition
\begin{equation}
  \label{eq:Sommerfeld}
    \lim_{r=|x|\to \infty} \sqrt{r} (\partial_r u - {\rm i} \kappa u )= 0.  
\end{equation}

There are two problems associated with the model \eqref{eq:forward}--\eqref{eq:Sommerfeld}. The direct problem is to study the radiated random wave field $u$ for a given random source  $f$. This work focuses on the inverse random source problem, which aims at determining the mean $g$ and the variance $h^2$ of the random source $f$ from the radiated wave field $u$ that is measured on the boundary $\Gamma_R=\{x\in\mathbb R^2: |x|=R\}$ at a discrete set of wavenumbers $\kappa_j$, $j=1,2,...,m$. More precisely, we consider the following three types of inverse source problems according to the status of $\eta$.

\begin{inverseproblem}[IP1]\label{Homo}
The medium is homogeneous with $\eta=0$. The inverse random source problem is to recover $g$ and $h^2$ from the wave field $u$ measured on the boundary $\Gamma_R$.
\end{inverseproblem}

\begin{inverseproblem}[IP2]\label{Inhomo}
The medium is inhomogeneous with known $\eta$. The inverse random source problem is to recover $g$ and $h^2$ from the wave field $u$ measured on the boundary $\Gamma_R$.
\end{inverseproblem}

\begin{inverseproblem}[IP3]\label{UnknownInhomo}
The medium is inhomogeneous with unknown  $\eta$. The inverse random source problem is to recover $g$ and $h^2$ from the wave field $u$ measured on the boundary $\Gamma_R$.
\end{inverseproblem}

Previous studies have focused on solving the inverse source problems IP1 and IP2. Based on Green's function, IP1 was studied in \cite{bao2016inverse}. In particular, the direct source problem was shown to have a unique mild solution, and a regularized block Kaczmarz method was developed for the inverse source problem. When the medium is inhomogeneous with a variable function $\eta$, the explicit Green function is no longer available. In \cite{li2017inverse}, the authors examined the Lippman--Schwinger integral equation for the direct source problem and considered the Dirichlet eigenvalue problem of the corresponding Helmholtz equation for IP2. However, these methods fail to address IP3, where the inhomogeneous medium is unknown. 

This study introduces an innovative data-driven approach suitable for solving all three inverse source problems mentioned previously. The training process of the method consists of two stages, which we abstractly describe as follows.

\begin{description}
\item[Stage one] The training dataset consists of  $M$ samples, denoted as  $ \{ (p_i, Y_i) \}_{i=1}^M $, where each  $p_i$ represents multi-frequency measurements of the wave field, and each  $Y_i$ represents the corresponding statistics of the source for the  $i$-th sample. We apply a regularized block Kaczmarz method using measurement  $p_i$, which will be further explained in Section \ref{Sec:3}, leading to an initial approximation $X_i$.

\item[Stage two] Form a new dataset $ \{ (X_i, Y_i) \}_{i=1}^M $ and employ learning algorithms to train a model $\chi$ that approximates the mapping from the initial approximation $X_i$ in stage one to the ground truth $Y_i$.
\end{description}

Having obtained the model $\chi$ in stage two, we can use it to reconstruct source statistics with any wave measurement $p_{\text{test}}$ in two steps. Firstly, by using the regularized Kaczmarz method as in the training process, we obtain an initial approximation $X_{\text{test}}$ from the measurement data  $p_{\text{test}}$. Secondly, we input  $X_{\text{test}}$ into the model  $\chi$ to obtain a reconstruction  $\chi(X_{\text{test}})$, which is expected to provide a better approximation of the exact statistics  $Y_{\text{test}}$ than the initial approximation $X_{\text{test}}$.

The training dataset in stage one can be obtained from physical experiments or real-world applications. In this work, we generate the synthetic data by creating random samples of $Y_i$ and solving the direct problem numerically for the measurement data $p_i$. The details for generating the training dataset can be found in Section \ref{Sec:5}.  The function $\eta$ is only used when computing the synthetic data, and the information of the medium is not needed in the training or testing process of the proposed method.  Thus, the two-stage approach works for IP1, IP2, and IP3, regardless of the inhomogeneity function $\eta$.

\section{Stage one: the Kaczmarz method}\label{Sec:3}

Stage one is to generate an initial image of the reconstruction. In this section, we briefly introduce the mild solution to the direct source problem of the stochastic Helmholtz equation in a homogeneous medium and the regularized block Kaczmarz method for solving the inverse random source problem. The detail can be found in \cite{bao2016inverse}.

\subsection{Integral equations}

Following \cite[Hypothesis 2.4]{bao2016inverse}, we assume that $g\in L^2(D)$, $h\in C^{0,\alpha}(D)$,  where $\alpha\in(0,1]$, and $h\in L^\beta(D)$, where $\beta\in(\beta_0,\infty]$ if $\frac{3}{2}\leq \beta_0\leq 2$ or $\beta\in (\beta_0, \frac{3\beta_0}{3-2\beta_0})$ if $1\leq \beta_0\leq \frac{3}{2}$. It is shown in \cite[Theorem 2.7]{bao2016inverse} that the direct source problem \eqref{eq:forward}--\eqref{eq:Sommerfeld} admits a unique mild solution 
\begin{equation}\label{eq:mildsolution}
u(x) = \int_D G(x,y,\kappa) g(y) dy +\int_D G(x,y,\kappa) h(y) dW_y,\quad a.s.,
\end{equation}
where $G(x,y)=-\frac{\rm i}{4} H_0^{(1)} (\kappa |x-y|)$ is Green's function of the two-dimensional Helmholtz equation with $H_0^{(1)}$ being the Hankel function of the first kind with order zero.

Taking the expectation on both sides of \eqref{eq:mildsolution}, we get 
\begin{equation*}
\mathbb E(u(x,\kappa_j) ) = \int_D G(x,y,\kappa_j) g(y)dy. 
\end{equation*}
Splitting the above equation into the real and imaginary parts leads to 
\begin{align}
    \mathbb E(\Re u(x,\kappa_j)) & =  \int_D \Re G(x,y,\kappa_j) g(y)dy, \label{eq:meanreal}\\
       \mathbb E(\Im u(x,\kappa_j))& =  \int_D \Im G(x,y,\kappa_j) g(y)dy. \label{eq:meanimag}
\end{align}
The mean $g$ of the source $f$ can be obtained by solving either \eqref{eq:meanreal} or \eqref{eq:meanimag}. We use \eqref{eq:meanreal} to present the results in the numerical experiments. 

Taking the variance on both sides of the real and imaginary parts of \eqref{eq:mildsolution} yields
\begin{align}
    \mathbb V (\Re u(x,\kappa_j)) & = \int_D |\Re G(x,y,\kappa_j)|^2 h^2(y)dy, \label{eq:varreal}\\
    \mathbb V (\Im u(x,\kappa_j)) & =  \int_D |\Im G(x,y,\kappa_j)|^2 h^2(y)dy. \label{eq:varimag}
\end{align}
Subtracting \eqref{eq:varimag} from  \eqref{eq:varreal}, we obtain 
\begin{equation}\label{eq:var}
     \mathbb V (\Re u(x,\kappa_j)) - \mathbb V (\Im u(x,\kappa_j)) =  \int_D \left(|\Re G(x,y,\kappa_j)|^2- |\Im G(x,y,\kappa_j)|^2\right) h^2(y)dy. 
\end{equation}
In accordance with \cite{bao2016inverse}, \eqref{eq:var} yields better reconstruction results than \eqref{eq:varreal} or \eqref{eq:varimag}, since the singular values of the discretized integral kernel in \eqref{eq:var} decay more slowly than those in \eqref{eq:varreal} or \eqref{eq:varimag}. Therefore, \eqref{eq:var} is adopted to reconstruct $h^2$ in our numerical experiments.

Neglecting the discretization error, the matrix equations corresponding to integral equations \eqref{eq:meanreal} and \eqref{eq:var} can be formally expressed as
\begin{equation}\label{operatoreq}
    A_j q = p_j, \quad j=1,\dots, m,
\end{equation}
where $p_j$ denotes the data $\mathbb{E}(\Re u(x,\kappa_j))$ or  $\mathbb{V}(\Re u(x,\kappa_j))- \mathbb{V}(\Im u(x,\kappa_j))$ at measurement points on $\Gamma_R$, $q$ stands for the unknown statistics $g$ or \( h^2 \) taken at sampling points in $D$, and $A_j$ represents the matrix arising from discretizing the integral kernels in \eqref{eq:meanreal} and \eqref{eq:var}. To solve the ill-posed equation \eqref{operatoreq}, a regularized block Kaczmarz method is introduced in \cite{bao2016inverse}: let $q_\gamma^0 = 0$,
\begin{equation}\label{KMscheme}
    \begin{cases}
    q_0 = q_\gamma^k,\\
    q_j =  q_{j-1}+A_j^* (\gamma I +A_j A_j^*)^{-1}(p_j-A_j q_{j-1}),\quad j=1,\dots, m,\\
    q_\gamma^{k+1} = q_m,
    \end{cases}
\end{equation}
where $k=0,1,\dots$ is the iteration index of the outer loop, $A_j^*$ denotes the conjugate transpose of $A_j$,  $\gamma>0$ is the regularization parameter, and $I$ is the identity matrix.

Combined with multi-frequency data, the regularized Kaczmarz method effectively provides reconstructions for the statistics of the random source. However, the method has several limitations. 

\begin{enumerate}
    \item Accurately approximating the mean or variance using their corresponding averages in \eqref{eq:meanreal} or \eqref{eq:var} typically requires a large number of trials. In practice, only a limited number of realizations are available,  leading to significant errors in the approximation on the left-hand side of \eqref{eq:meanreal} or \eqref{eq:var}. Figure \ref{fig:compare} shows the reconstructions of the mean of a random source using different numbers of realizations. The accuracy of the reconstruction improves as more realizations are used for the reconstruction.
    \item The sequence generated by the Kaczmarz method typically converges to a minimum norm solution of the linear system as demonstrated by \cite{natterer86} . Consequently, it is difficult for the method to reconstruct a random source with discontinuous statistics, particularly for the regularized method. Figure \ref{fig:compare} displays a random source with a piecewise constant mean. Although increasing the number of realizations improves the result,  accurately capturing the edges of the true mean remains challenging.
    \item The mild solution \eqref{eq:mildsolution} and the Fredholm integral equations \eqref{eq:meanreal}--\eqref{eq:var} are based on the assumption that the medium is homogeneous. However, this method cannot be directly applied to solve IP2 and IP3, where the medium is inhomogeneous and the analytical expression of the Green function is not available. To address IP2, Li et al. \cite{li2017inverse} suggested an approach utilzing the corresponding Dirichlet eigenvalue problem of the Helmholtz equation. However, this approach is not applicable to complex media and can be time consuming when higher order eigenfunctions are needed to represent sources with higher frequency modes. Moreover, it cannot be applied to solve IP3, where the inhomogeneous medium is assumed to be unknown.
\end{enumerate}

\begin{figure}[ht!]
  \centering
  \begin{subfigure}[b]{0.24\textwidth}
    \centering
    \includegraphics[width=\textwidth]{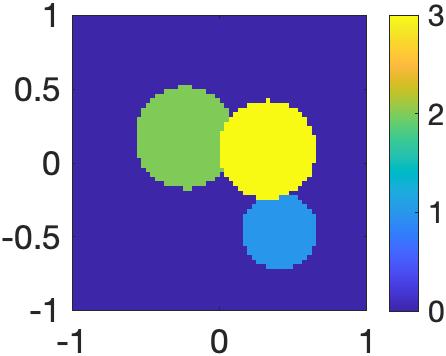}
    \caption{}
  \end{subfigure}
  \hfill
  \begin{subfigure}[b]{0.24\textwidth}
    \centering
    \includegraphics[width=\textwidth]{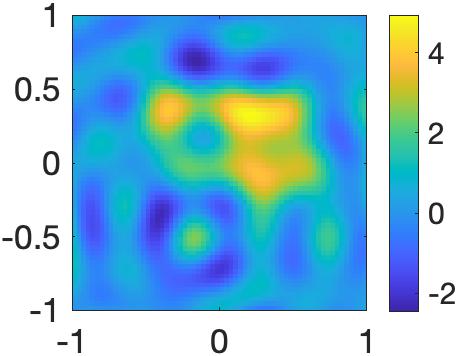}
    \caption{}
  \end{subfigure}
  \hfill
  \begin{subfigure}[b]{0.24\textwidth}
    \centering
    \includegraphics[width=\textwidth]{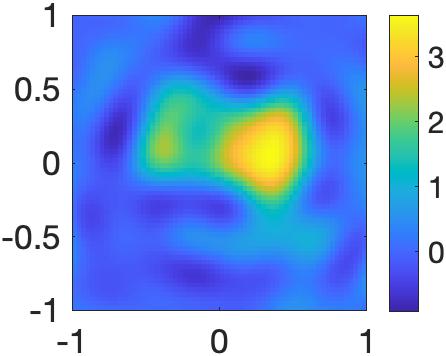}
    \caption{}
  \end{subfigure}
  \hfill
  \begin{subfigure}[b]{0.24\textwidth}
    \centering
    \includegraphics[width=\textwidth]{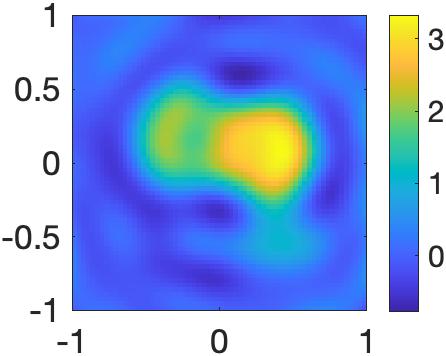}
    \caption{}
  \end{subfigure}
  \caption{Reconstructions with different numbers of trials. (A) the exact mean of a random source;
(B/C/D) The reconstruction of mean with measurements of 10/100/1000 realizations.}  \label{fig:compare}
\end{figure}

In this work, we adopt the regularized Kaczmarz method \eqref{KMscheme} as the first stage to generate an initial image of the reconstruction, independent of the relative permittivity $\eta$. Subsequently, the initial approximation is then passed to the second stage of the image-to-image translation problem, which can be effectively handled by machine learning techniques.

\subsection{Error analysis}

In this section, we present an error analysis for the regularized Kaczmarz method \eqref{KMscheme}. The total error is composed of two parts: the expectation or variance in the expression of data $p_j$ must be approximated by the sample mean, and the measurement of the wave field $u(x,\kappa_j)$ may be polluted by random noise. In \cite{bao2017inverse}, the convergence of the regularized Kaczmarz method \eqref{KMscheme} was shown for noise-free data. However, since data always contains noise in practice, an accumulated error may be generated during the iteration, leading to the semi-convergence phenomenon, which was examined in \cite{elfving2014semi} for the Kaczmarz method. For a comprehensive account of the general Kaczmarz method for solving a linear system of equations, we refer to \cite{natterer86}. 

We provide a theoretical analysis of the semi-convergence property of the regularized block Kaczmarz method. Recall that the integral equation \eqref{eq:meanreal} or \eqref{eq:var} for the reconstruction of $g$ or $h^2$ can be written as a matrix equation system \eqref{operatoreq}:
\begin{equation*}
    A_j q = p_j, \quad j=1,\dots, m.
\end{equation*}
The system \eqref{operatoreq} is equivalent to
\begin{equation*}
Aq = p,
\end{equation*}
where
$$A = \left[\begin{array}{c} A_1\\ \vdots\\ A_m \end{array}\right],\quad p = \left[\begin{array}{c} p_1\\ \vdots\\ p_m \end{array}\right].$$

In the inverse source problem, there are usually fewer measurement points compared to the number of sampling points of the solution. As a result, it is reasonable to assume that the matrix $A$ has full row rank. Let $A^\dagger$ denote the pseudo-inverse of $A$ and $q^\dagger = A^\dagger p$ denote the minimal norm solution of the inverse source problem with the noise-free data $p$. Let $\tilde{p}= [ \tilde{p}_1^*,  \tilde{p}_2^*,..., \tilde{p}_m^*]^*$ denote the perturbed data, and $q_\gamma^k$, $\tilde{q}_\gamma^k$ denote the outputs of the scheme $\text{\eqref{KMscheme}}$ with $p$ and $\tilde{p}$ as inputs respectively, after $k$ iterations.

 The error between the minimal norm solution $ q^\dagger$ and the \( k \)-th iteration $\tilde{q}_\gamma^k$  can be decomposed into two terms:
\begin{align*}
  \tilde{q}_\gamma^k - q^\dagger = (\tilde{q}_\gamma^k - q_\gamma^k ) + (q_\gamma^k - q^\dagger).
\end{align*}
According to \cite[Theorem 3.7]{bao2017inverse}, we have $q_\gamma^k - q^\dagger \rightarrow 0$ as $k \to \infty$ and $\gamma \to 0$.
In the following, we analyze \( \tilde{q}_\gamma^k - q_\gamma^k \), which represents the data error accumulated to the \( k \)-th iteration. To this end, we decompose
\[
\gamma I+A A^{* }=\left[\begin{array}{ccc}
\gamma I+A_1 A_1^{* } & \cdots & A_1 A_m^{* } \\
\vdots & \ddots & \vdots \\
A_m A_1^{* } & \cdots & \gamma I+A_m A_m^{* }
\end{array}\right]=D_\gamma+L+L^{* },
\]
where the matrices $D_\gamma$ and $L$ are given by 
$$
D_\gamma=\left[\begin{array}{ccc}
\gamma I+A_1 A_1^{* } & \cdots & 0 \\
\vdots & \ddots & \vdots \\
0 & \cdots & \gamma I+A_m A_m^{* }
\end{array}\right], \quad L=\left[\begin{array}{cccc}
0 & 0 & \cdots & 0 \\
A_2 A_1^{* } & 0 & \cdots & 0 \\
\vdots & \ddots & \ddots & \vdots \\
A_m A_1^{* } & \cdots & A_m A_{m-1}^{* } & 0
\end{array}
\right].
$$
Let 
\begin{equation}
 M_\gamma = (D_\gamma +L)^{-1}, \quad  Q_\gamma = A^* M_\gamma A,\quad B_\gamma  = I-Q_\gamma,\label{notation}
 \end{equation}
the regularized block Kaczmarz method   \eqref{KMscheme} can be equivalently formulated as
\begin{equation}\label{equivKM}
q_\gamma^{k+1} = B_\gamma q_\gamma^k + A^*  M_\gamma p.
\end{equation}
To estimate the error term \( \tilde{q}_\gamma^k - q_\gamma^k \), we define
 \begin{equation}\label{notationBR}
 \tilde{B}_\gamma = B_\gamma P,\quad  R_\gamma = (D_{2\gamma})^{1/2} M_\gamma A ,
 \end{equation}
 where $P$ is the orthogonal projection onto the range space of $A^*$, i.e., 
$\mathcal{R}(A^*)$, and \( D_{2\gamma} \) denotes the matrix \( D_\gamma \) with \( \gamma \) being replaced by \( 2\gamma \).

\begin{lemma} \label{lemmanormB}
Let $\sigma_\gamma$ be the smallest nonzero singular value of $R_\gamma$, then it holds that  \( \sigma_\gamma \leq 1 \), and
\begin{equation*}
\|\tilde{B}_\gamma\|= \sqrt{1- \sigma_\gamma^2}.
\end{equation*}
\end{lemma}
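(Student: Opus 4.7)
The plan is to derive the key algebraic identity
\[
B_\gamma^{*} B_\gamma + R_\gamma^{*} R_\gamma = I,
\]
and then read off the norm of $\tilde B_\gamma = B_\gamma P$ from it. Write $N = D_\gamma + L$ so that $M_\gamma = N^{-1}$, and note the decomposition already in the excerpt gives $N + N^{*} = 2 D_\gamma + L + L^{*} = D_{2\gamma} + A A^{*}$, i.e.
\[
A A^{*} = N + N^{*} - D_{2\gamma}.
\]
First I would expand
\[
I - B_\gamma^{*} B_\gamma = Q_\gamma + Q_\gamma^{*} - Q_\gamma^{*} Q_\gamma = A^{*}\bigl(M_\gamma + M_\gamma^{*} - M_\gamma^{*} A A^{*} M_\gamma\bigr) A,
\]
substitute the expression for $AA^{*}$ above, and use the trivial identities $M_\gamma^{*} N M_\gamma = M_\gamma^{*}$ and $M_\gamma^{*} N^{*} M_\gamma = M_\gamma$ to collapse everything to $A^{*} M_\gamma^{*} D_{2\gamma} M_\gamma A = R_\gamma^{*} R_\gamma$. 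This is the identity $B_\gamma^{*} B_\gamma = I - R_\gamma^{*} R_\gamma$; this algebraic telescoping is the main step, since everything afterwards is essentially bookkeeping.

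Next I would extract two consequences. Since $B_\gamma^{*} B_\gamma \succeq 0$, the identity forces $R_\gamma^{*} R_\gamma \preceq I$, hence every singular value of $R_\gamma$ is at most one, giving $\sigma_\gamma \le 1$. Second, because $R_\gamma = D_{2\gamma}^{1/2} M_\gamma A$ factors through $A$, we have $\ker R_\gamma \supseteq \ker A = \mathcal R(A^{*})^\perp$, so $R_\gamma P = R_\gamma$ and $P R_\gamma^{*} R_\gamma P = R_\gamma^{*} R_\gamma$.

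Multiplying the identity on both sides by $P$ then yields
\[
(B_\gamma P)^{*}(B_\gamma P) = P B_\gamma^{*} B_\gamma P = P - R_\gamma^{*} R_\gamma.
\]
I would finish by computing the operator norm of the right-hand side. On $\mathcal R(A^{*})^\perp$ both $P$ and $R_\gamma^{*} R_\gamma$ vanish. On $\mathcal R(A^{*})$ the operator $P$ acts as the identity and $R_\gamma^{*} R_\gamma$ has eigenvalues $\sigma_i^{2}$ ranging over the nonzero singular values of $R_\gamma$; thus $P - R_\gamma^{*} R_\gamma$ has eigenvalues $1 - \sigma_i^{2} \in [0,1]$. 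Because all $\sigma_i \le 1$, the maximum eigenvalue is attained at the smallest $\sigma_i$, namely $\sigma_\gamma$, giving $\|P - R_\gamma^{*} R_\gamma\| = 1 - \sigma_\gamma^{2}$. Taking a square root, $\|\tilde B_\gamma\| = \sqrt{1 - \sigma_\gamma^{2}}$, as claimed. The only conceptually nontrivial step is the telescoping identity in the first paragraph; verifying $\sigma_\gamma \le 1$, the invariance $R_\gamma P = R_\gamma$, and the eigenvalue computation are all short consequences.
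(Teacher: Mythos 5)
Your proposal is correct and follows essentially the same route as the paper: both hinge on the identity $B_\gamma^* B_\gamma = I - R_\gamma^* R_\gamma$ obtained by substituting $AA^* = (D_\gamma+L)+(D_\gamma+L^*)-D_{2\gamma}$ and telescoping, and both then compute $\|\tilde B_\gamma\|$ by sandwiching with $P$. The only cosmetic difference is that the paper finishes via the compact SVD of $R_\gamma$ (using $P=VV^*$) while you diagonalize $P - R_\gamma^*R_\gamma$ directly on $\mathcal{R}(A^*)$ and its complement; both steps rely on the same fact that $\ker R_\gamma = \ker A$.
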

  
\begin{proof}
From \eqref{notation}, we can see that
  \begin{equation*}
    B_\gamma^* B_\gamma = ( I-Q_\gamma)^* (I-Q_\gamma) = I -  A^* (M_\gamma + M_\gamma^*) A   +Q_\gamma^* Q_\gamma.
  \end{equation*}
By substituting the expression of $Q_\gamma$ and performing a straightforward calculation, we obtain: 
  \begin{align*}
    Q_\gamma^* Q_\gamma &= A^* M_\gamma^* AA^* M_\gamma A
                          = A^*
                          M_\gamma^*  (D_\gamma + L + L^* - \gamma I) M_\gamma A \\
                        &= A^*
                          M_\gamma^*  [(D_\gamma + L) + (D_\gamma + L^*) - D_{2\gamma}]
                          M_\gamma A\\
                        &= A^* M_\gamma^*  \left[ M_\gamma^{-1} + (M_\gamma^{*})^{-1} - D_{2
                          \gamma} \right] M_\gamma A \\
                        &= A^*  (M_\gamma + M_\gamma^*) A - A^*
                          M_\gamma^* D_{2 \gamma} M_\gamma A.
 \end{align*} 
Combining the above equations with the expression in \eqref{notationBR} of $R_\gamma$, we obtain: 
 \begin{align*}
 B_\gamma^* B_\gamma = I - A ^* M_\gamma^* D_{2\gamma} M_\gamma A = I - R_\gamma^* R_\gamma,
\end{align*}
which then implies that \( \sigma_\gamma \leq 1 \).

A simple calculation shows that
\[ 
  \tilde{B}_{\gamma}^{\ast}  \tilde{B}_{\gamma} = P (I - R_{\gamma}^{\ast} R_{\gamma}) P = P -
   PR_{\gamma}^{\ast} R_{\gamma} P = P - R_{\gamma}^{\ast} R_{\gamma} P = (I -
   R_{\gamma}^{\ast} R_{\gamma}) P.
\]
Let \(  U \Sigma V^{\ast} \) be the compact singular value decomposition (SVD) of \( R_\gamma \). Then we have $P = VV^{\ast}$ and
\[ \tilde{B}_{\gamma}^{\ast}  \tilde{B}_{\gamma} = (I - V \Sigma^2 V^{\ast})
   VV^{\ast} = VV^{\ast} - V \Sigma^2 V^{\ast} = V (I - \Sigma^2) V^{\ast}. 
\]
Letting $\lambda_{\text{max}}$ denote the largest singular value of a square matrix, we get
\[ \| \tilde{B}_{\gamma} \| = \sqrt{\lambda_{\text{max}}
   (\tilde{B}_{\gamma}^{\ast}  \tilde{B}_{\gamma})} = \sqrt{1 - \sigma_\gamma^2}, 
\]
which completes the proof. 
\end{proof}

The following result is concerned with the accumulated data error  \( \tilde{q}_\gamma^k-q_\gamma^k \) of the regularized Kaczmarz method. 

\begin{theorem}\label{thm:ek}
  Let \( e^k = \tilde{q}_\gamma^k-q_\gamma^k \), then

  \begin{equation} \label{eq:4}
    \|e^k\| \leq \left[ \frac{1 - (1 - \sigma_{\gamma}^2)^{\frac{k}{2}}}{1 - (1 -
        \sigma_{\gamma}^2)^{\frac{1}{2}}} \right] \left\|A^* M_\gamma(\tilde{p}-p)\right\|.
  \end{equation}
\end{theorem}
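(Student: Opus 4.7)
The plan is to turn the perturbation analysis into a first-order linear recursion for $e^k = \tilde q_\gamma^k - q_\gamma^k$, unroll it into a telescoping sum, and then bound the sum geometrically using Lemma \ref{lemmanormB}. First, applying the fixed-point form \eqref{equivKM} to both the clean data $p$ and the perturbed data $\tilde p$ with the common initial guess $q_\gamma^0 = 0$ and subtracting yields
\[
e^{k+1} = B_\gamma e^k + A^* M_\gamma(\tilde p - p), \qquad e^0 = 0,
\]
which immediately unrolls to
\[
e^k = \sum_{j=0}^{k-1} B_\gamma^{j} A^* M_\gamma(\tilde p - p).
\]

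The central step, and the only one that requires care, is replacing $B_\gamma$ by $\tilde B_\gamma$ inside this sum so that Lemma \ref{lemmanormB} becomes applicable. Set $v := A^* M_\gamma(\tilde p - p)$. Then $v \in \mathcal{R}(A^*)$, hence $Pv = v$. Moreover, because $B_\gamma = I - A^* M_\gamma A$, the subspace $\mathcal{R}(A^*)$ is invariant under $B_\gamma$; a short induction on $j$ then gives $B_\gamma^{j} v = (B_\gamma P)^{j} v = \tilde B_\gamma^{j} v$ for every $j\geq 0$.

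Combining these two observations with the triangle inequality and the submultiplicativity of the operator norm yields
\[
\|e^k\| \leq \sum_{j=0}^{k-1} \|\tilde B_\gamma\|^{j}\, \|v\|,
\]
and Lemma \ref{lemmanormB} identifies $\|\tilde B_\gamma\|$ with $\sqrt{1-\sigma_\gamma^2}\in[0,1]$. The geometric series then sums to the bracketed factor in \eqref{eq:4}, which completes the estimate.

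The main obstacle is really the subspace-reduction argument: one cannot bound $\|B_\gamma\|$ itself by $\sqrt{1-\sigma_\gamma^2}$, so a direct norm estimate on the unrolled sum would fail. Restricting the action of $B_\gamma$ to $\mathcal{R}(A^*)$, where it agrees with $\tilde B_\gamma$, is what makes Lemma \ref{lemmanormB} usable; the remaining steps are a standard recursion unrolling and a geometric-sum identity.
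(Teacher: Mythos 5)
Your proposal is correct and follows essentially the same route as the paper: the same recursion $e^{k+1}=B_\gamma e^k + A^*M_\gamma(\tilde p-p)$, the same unrolling into a partial geometric sum, the same replacement of $B_\gamma$ by $\tilde B_\gamma$ via the invariance of $\mathcal{R}(A^*)$, and the same appeal to Lemma~\ref{lemmanormB}. Your explicit justification that $B_\gamma^{j}v=\tilde B_\gamma^{j}v$ for $v\in\mathcal{R}(A^*)$ is a slightly more detailed rendering of the paper's one-line invariance remark, but the argument is identical in substance.
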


\begin{proof}
Using \eqref{equivKM}, we can write 
\begin{equation*}
e^{k+1} = B_\gamma e^k + A^* M_\gamma (\tilde{p} - p),
\end{equation*}
which leads to 
\begin{equation}\label{eq:ek1}
e^k = \left[ \sum_{i=0}^{k-1} (B_\gamma)^i \right] A^* M_\gamma (\tilde{p} - p).
\end{equation}
Since $\mathcal{R}(A^*)$ is an invariant subspace of $B_\gamma$, we can further deduce from \eqref{eq:ek1} that 
\begin{equation} \label{eq:2}
e^k = \left[  \sum_{i=0}^{k-1} (\tilde{B}_\gamma)^i\right] A^* M_\gamma (\tilde{p} - p).
\end{equation}
Let $S_\gamma^k = \sum_{i=0}^{k-1} (\tilde{B}_\gamma)^i$. Utilizing Lemma  \ref{lemmanormB}, we have
\begin{equation*} \label{eq:3}
  \| S_{\gamma}^k \| \leqslant \sum_{i = 0}^{k - 1} \| \tilde{B}_{\gamma} \|^i
= \frac{1 - \| \tilde{B}_{\gamma} \|^k}{1 - \| \tilde{B}_{\gamma} \|} =
\frac{1 - (1 - \sigma_{\gamma}^2)^{\frac{k}{2}}}{1 - (1 -
\sigma_{\gamma}^2)^{\frac{1}{2}}} .
\end{equation*}
Combining the above inequality with \eqref{eq:2},  we obtain the estimate given in \eqref{eq:4}.
\end{proof}

The following corollary further provides an upper bound of the accumulated data error. 

\begin{corollary}\label{cor:k}
The error \(e^k =  \tilde{q}_\gamma^k-q_\gamma^k\) in the regularized block Kaczmarz algorithm satisfies the following estimates: 
\begin{itemize}
 \item[(i)] if $2\leq k\leq2\sigma_\gamma^{-2}$, then 
\begin{equation}\label{eq:estimate_error1}
\Vert e^k\Vert \leq\frac{  \delta_\gamma \sigma_\gamma^2 }{2(1 - (1 -
        \sigma_\gamma^2)^{\frac{1}{2}})} k;
        \end{equation}
\item[(ii)] if $k>2\sigma_\gamma^{-2}$, then 
\begin{equation}\label{eq:estimate_error2}
\Vert e^k\Vert \leq \frac{\delta_\gamma }{1 - (1 -
        \sigma_{\gamma}^2)^{\frac{1}{2}}},
\end{equation}
\end{itemize}
where \(\delta_\gamma=\left\|A^* M_\gamma(\tilde{p}-p)\right\|\).
\end{corollary}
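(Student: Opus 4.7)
The plan is to deduce both bounds directly from Theorem \ref{thm:ek}, which already supplies the uniform inequality
\[
\|e^k\| \leq \frac{1 - (1 - \sigma_\gamma^2)^{k/2}}{1 - (1 - \sigma_\gamma^2)^{1/2}} \, \delta_\gamma.
\]
Thus the entire task reduces to controlling the numerator $1 - (1 - \sigma_\gamma^2)^{k/2}$ in the two stated regimes. Throughout, I would use the fact from Lemma \ref{lemmanormB} that $\sigma_\gamma \in (0, 1]$, so that $1 - \sigma_\gamma^2 \in [0,1)$ and the power $(1 - \sigma_\gamma^2)^{k/2}$ is well defined and lies in $[0,1]$.

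For part (ii), I would simply use the trivial bound $1 - (1 - \sigma_\gamma^2)^{k/2} \leq 1$, which when substituted into \eqref{eq:4} yields \eqref{eq:estimate_error2} at once. This argument is in fact valid for every $k \geq 1$; the restriction $k > 2\sigma_\gamma^{-2}$ is only there because it is precisely the range in which \eqref{eq:estimate_error2} is the tighter of the two stated estimates.

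For part (i), I plan to apply Bernoulli's inequality in the form $(1 + x)^r \geq 1 + rx$, valid for $r \geq 1$ and $x \geq -1$. Setting $x = -\sigma_\gamma^2$ and $r = k/2$, the hypothesis $k \geq 2$ is exactly what guarantees $r \geq 1$; this gives $(1 - \sigma_\gamma^2)^{k/2} \geq 1 - \frac{k \sigma_\gamma^2}{2}$, and therefore $1 - (1 - \sigma_\gamma^2)^{k/2} \leq \frac{k \sigma_\gamma^2}{2}$. Substituting into \eqref{eq:4} yields \eqref{eq:estimate_error1}. A quick check confirms that the bounds in (i) and (ii) coincide at the crossover point $k = 2\sigma_\gamma^{-2}$, so the two estimates fit together continuously.

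I do not anticipate any serious obstacle: the proof is essentially a convexity estimate on the scalar function $t \mapsto (1-\sigma_\gamma^2)^{t}$ combined with the trivial bound. The only point requiring care is matching the hypothesis of Bernoulli's inequality with the assumption $k \geq 2$, and invoking $\sigma_\gamma \leq 1$ from Lemma \ref{lemmanormB} so that the quantity under the fractional power is non-negative.
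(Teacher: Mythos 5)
Your proof is correct and follows essentially the same route as the paper: the trivial bound $1-(1-\sigma_\gamma^2)^{k/2}\leq 1$ gives (ii), and the Bernoulli-type estimate $1-(1-\sigma_\gamma^2)^{k/2}\leq \tfrac{k}{2}\sigma_\gamma^2$ for $k\geq 2$ gives (i), both substituted into \eqref{eq:4}. The only difference is presentational: you explicitly invoke Bernoulli's inequality and check its hypotheses, where the paper merely states the estimate as an observation.
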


\begin{proof}
We start by introducing the function 
  $$\Psi( \sigma, k)=\frac{1 - (1 - \sigma^2)^{\frac{k}{2}}}{1 - (1 -
        \sigma^2)^{\frac{1}{2}}}.$$
         It follows from Lemma \ref{lemmanormB} that  $0< \sigma_\gamma\leq 1$. It is easy to verify that the function $\Psi( \sigma_\gamma, k)$ admits an upper bound for any $k$:
        \begin{equation}\label{est:psi_cons}
        \Psi(\sigma_\gamma,k)\leq \frac{1}{1 - (1 -
        \sigma_{\gamma}^2)^{\frac{1}{2}}}.
        \end{equation}
        Substituting \eqref{est:psi_cons}  into \eqref{eq:4} leads to the upper bound \eqref{eq:estimate_error2}  for any $k$. 
        
      For $k\geq 2$,  we observe that the function $1- (1-\sigma_\gamma^2)^{\frac{k}{2}} $ satisfies the estimate 
\begin{equation*}
1- (1-\sigma_\gamma^2)^{\frac{k}{2}} \leq \frac{k}{2}\sigma_\gamma^2.
 \end{equation*}
This implies that
\begin{equation}\label{est:psi1}
\Psi( \sigma_\gamma, k)\leq \frac{ {k}\sigma_\gamma^2 }{2(1- (1 -
        \sigma_\gamma^2)^{\frac{1}{2}})}.
 \end{equation}
 When $k\leq 2\sigma_\gamma^{-2}$, the estimate in \eqref{est:psi1} can be further reduced to
$$\frac{ {k}\sigma_\gamma^2 }{2(1 - (1 -
        \sigma_\gamma^2)^{\frac{1}{2}})}\leq  \frac{1}{1 - (1 -
        \sigma_{\gamma}^2)^{\frac{1}{2}}}.$$
       For $2\leq k\leq 2\sigma_\gamma^{-2}$, substituting \eqref{est:psi1} into \eqref{eq:4} yields a sharper upper bound \eqref{eq:estimate_error1}.
\end{proof}

\begin{remark}
 The above corollary indicates that the data error is bounded above by an increasing function of $k$ when $k$ is smaller than $2\sigma_\gamma^{-2}$, which is consistent with the observed accumulation of data errors during iteration in numerical experiments. In practice, the value of $\sigma_\gamma^{-2}$ might be very large. Although it is unclear how to achieve a sharper estimate in Corollary \ref{cor:k}, the result tracks the behavior of the data error.
\end{remark}

In the following analysis, we examine the impact of data perturbations on the reconstruction process, taking the example of reconstructing the mean $g$. For simplicity, we use the same notation to represent the function and its discrete values obtained from measurements or sampling. Recall that the data $p_j$ corresponds to measurements of $\mathbb{E}(\Re u(x,\kappa_j))$. Due to data noise, the perturbed data $\tilde{p}_j$ is modeled by $\mathbb{E}_N(\Re \tilde{u}(x,\kappa_j))$, where $N$ denotes the number of realizations, $\mathbb{E}_N$ stands for the sample mean, and $\tilde{u}(x,\kappa_j)$ represents the measurement of $u(x,\kappa_j)$ with additive noise. We use $\tilde{u}_j$ and $u_j$ to denote $\tilde{u}(x,\kappa_j)$ and $\mathbb{E}(u(x,\kappa_j))$, respectively. It should be noted that the analysis for reconstructing $h^2$ follows a similar process.

By \eqref{eq:mildsolution}, the wave field $u_j$ satisfies 
\begin{equation*}
u(x,\kappa_j) = \int_D G(x,y,\kappa_j) g(y) dy +\int_D G(x,y,\kappa_j) h(y) dW_y,\quad a.s..
\end{equation*}
Neglecting the discretization error, we can express $u_j$ and $\tilde{u}_j$ as
\begin{equation*} 
\Re u_j = A_j g,\quad \Re \tilde{u}_j =  A_j (g+h \dot{W}_x) + r_j, 
\end{equation*}
where $h \sim \mathcal{N} (0, H)$, $r_j \sim \mathcal{N} (0, \delta^2 I)$ with $H = \text{diag}
[h (x_i)^2]$ and $\delta$ indicating the noise level. We define 
\[ u_R = \left[\begin{array}{c}
     \Re u_1\\
     \vdots\\
     \Re u_m
   \end{array}\right], \quad \tilde{u}_R = \left[\begin{array}{c}
     \Re \tilde{u}_1\\
     \vdots\\
     \Re \tilde{u}_m
   \end{array}\right], \quad \text{and} \quad r = \left[\begin{array}{c}
     r_1\\
     \vdots\\
     r_m
   \end{array}\right] . \]
We further assume that the components of $h$, $r_1, r_2, \ldots, r_m$ are independent, leading to 
\[ \tilde{u}_R \sim \mathcal{N} (Ag, AHA^{\ast} + \delta^2 I) . \]
Thus, we obtain 
 $$ \tilde{u}_R- u_R \sim  \mathcal{N} (0, \Theta), $$
where  $\Theta  = AHA^{\ast} + \delta^2 I$. 
Moreover, from the definition of $\tilde{p}$ and the central limit theorem, we deduce that 
$$  \sqrt{N}(\tilde{p} -p ) \sim  \mathcal{N} (0, \Theta). $$

The following corollary provides an estimate for the accumulated data error in the distribution sense, and is a direct consequence of Theorem \ref{thm:ek}.

\begin{corollary}\label{cor:distri}
  Let \( e^k = \tilde{q}_\gamma^k-q_\gamma^k \). Then
  \begin{equation*}
    \sqrt{N} e^k \xrightarrow{d} \mathcal{N} (0, \Lambda_k), 
  \end{equation*}
  where
  \begin{equation*}
    \|\Lambda_k\| \leq \left[ \frac{1 - (1 - \sigma_{\gamma}^2)^{\frac{k}{2}}}{1 - (1 -
        \sigma_{\gamma}^2)^{\frac{1}{2}}} \right]^2 \left\|A^* M_\gamma\right\|^2\|\Theta\|.
  \end{equation*}
\end{corollary}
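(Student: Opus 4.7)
The plan is to combine the representation of the data-error $e^k$ already derived inside the proof of Theorem \ref{thm:ek} with the central limit theorem statement that precedes the corollary, and then use the standard fact that a linear image of a Gaussian is Gaussian.

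First, recall from the proof of Theorem \ref{thm:ek} that
\begin{equation*}
  e^k = S_\gamma^k\, A^{*} M_\gamma (\tilde{p}-p),
\end{equation*}
where $S_\gamma^k = \sum_{i=0}^{k-1}(\tilde{B}_\gamma)^i$. Multiplying by $\sqrt{N}$ yields $\sqrt{N}\,e^k = (S_\gamma^k A^{*} M_\gamma)\cdot \sqrt{N}(\tilde{p}-p)$. Since the paragraph preceding the corollary establishes, via the central limit theorem, that $\sqrt{N}(\tilde{p}-p) \xrightarrow{d} \mathcal{N}(0,\Theta)$, I would apply the continuous mapping theorem to the bounded linear operator $T := S_\gamma^k A^{*} M_\gamma$ to conclude
\begin{equation*}
  \sqrt{N}\,e^k \xrightarrow{d} T\, Z = \mathcal{N}(0,\Lambda_k), \qquad \Lambda_k = T\,\Theta\,T^{*},
\end{equation*}
where $Z \sim \mathcal{N}(0,\Theta)$; this is the first assertion of the corollary.

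Next, to obtain the norm bound on $\Lambda_k$, I would use submultiplicativity of the spectral norm together with $\|T\Theta T^{*}\| \le \|T\|^{2}\|\Theta\|$ to write
\begin{equation*}
  \|\Lambda_k\| \le \|S_\gamma^k\|^{2}\,\|A^{*} M_\gamma\|^{2}\,\|\Theta\|.
\end{equation*}
The factor $\|S_\gamma^k\|$ is controlled exactly as in the proof of Theorem \ref{thm:ek}: by Lemma \ref{lemmanormB} and the triangle inequality,
\begin{equation*}
  \|S_\gamma^k\| \le \sum_{i=0}^{k-1}\|\tilde{B}_\gamma\|^{i} = \frac{1-\|\tilde{B}_\gamma\|^{k}}{1-\|\tilde{B}_\gamma\|} = \frac{1-(1-\sigma_\gamma^{2})^{k/2}}{1-(1-\sigma_\gamma^{2})^{1/2}}.
\end{equation*}
Squaring and substituting gives the claimed estimate.

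There is no real obstacle here: the work is essentially bookkeeping, with the only point requiring a sentence of justification being the passage from the distributional convergence of $\sqrt{N}(\tilde{p}-p)$ to the distributional convergence of its image under the (deterministic, bounded, linear) operator $S_\gamma^k A^{*}M_\gamma$. Since the limiting law is Gaussian and linear maps preserve Gaussianity with covariance transforming as $T\Theta T^{*}$, the result is immediate, and the norm bound reduces to the same geometric-series estimate that drove Theorem \ref{thm:ek}.
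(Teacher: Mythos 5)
Your proposal is correct and follows exactly the route the paper intends: the paper gives no separate proof, stating only that the corollary is a direct consequence of Theorem \ref{thm:ek} together with the preceding central limit theorem observation $\sqrt{N}(\tilde{p}-p)\xrightarrow{d}\mathcal{N}(0,\Theta)$, and your argument (the representation $e^k = S_\gamma^k A^* M_\gamma(\tilde{p}-p)$, the continuous mapping theorem for the linear map $T$, $\Lambda_k = T\Theta T^*$, and the geometric-series bound on $\|S_\gamma^k\|$) is precisely that deduction spelled out.
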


These results demonstrate the dependence of the perturbation in the data and the variance of the accumulated data error on the number of realizations $N$ and the noise level $\delta$.

\section{Stage two: machine learning techniques}\label{Sec:4}

In this section, we present four machine learning techniques as the second stage of our proposed method. Let $\{(X_i, Y_i)\}_{i=1}^M$ be the training dataset, where $X_i\in\mathbb{R}^{n\times n}$ represents a matrix computed using the measurement data corresponding to the $i$-th sample from the first stage, $Y_i\in \mathbb{R}^{n\times n}$ denotes the exact mean or variance of the $i$-th sample after discretization, and $M$ is the total number of samples. The goal of the second stage is to learn the mapping from the approximations obtained in the first stage to the exact statistical properties by utilizing the training dataset. We present two methods based on linear regression and two methods based on neural networks.

\subsection{Principal component analysis}

Principal component analysis (PCA) is a well-established approach for dimensionality reduction in statistical analysis \cite{hotelling1933analysis}. Recently, Bhattacharya et al. \cite{bhattacharya2021model} combined PCA with neural network techniques to solve inverse problems of parametric partial differential equations. In our work, we build upon the idea of PCA to reduce the dimension of input and output spaces, and then apply linear regression as the second stage of our algorithm.

To prepare for training, we flatten each $X_i$ into a vector $\mathbf{x}_i\in \mathbb{R}^{n^2}$ and construct the input data matrix $\mathbf{X}\in  \mathbb{R}^{n^2\times M}$ by concatenating $\mathbf{x}_i$ as columns. Similarly, we construct the output data matrix $\mathbf{Y}\in  \mathbb{R}^{n^2\times M}$ with the flattened $Y_i$, denoted by $\mathbf{y}_i\in \mathbb{R}^{n^2}$. After concatenating the input and output data matrices, we perform mean subtraction and use $\mathbf{X}$ and $\mathbf{Y}$ to denote the data matrices after the mean centering step for convenience. Then, we utilize singular value decomposition (SVD) to derive the low-dimensional representation of the data. Specifically, we first implement singular value decomposition  
$$\mathbf{E} : = \begin{bmatrix} \mathbf{X}\\ \mathbf{Y}\end{bmatrix} = \mathbf{U}\mathbf{\Sigma}  \mathbf{V}^*,$$
where $\mathbf{U}\in \mathbb{R}^{2n^2\times 2n^2}$ and $\mathbf{V}\in\mathbb{R}^{M\times M}$ are orthonormal matrices, and $\mathbf{\Sigma}\in  \mathbb{R}^{2n^2\times M}$ is the diagonal singular value matrix.
Then, we keep the first $\sP$ principal components to form a low-rank approximation of the data matrix $\mathbf{E}$. Let $\mathbf{\Psi}_\sP = [\mathbf{u}_1 \ \mathbf{u}_2\ ...\ \mathbf{u}_\sP]$, where $\mathbf{u}_i$ denotes the $i$-th column of $\mathbf{U}$. From the Eckart--Young theorem, we conclude that $\mathbf{\Psi}_\sP$ minimizes
 $$\Vert \mathbf{E}-\mathbf{\Psi} \mathbf{\Psi}^* \mathbf{E}\Vert_F $$
over all orthonormal $\mathbf{\Psi}\in \mathbb{R}^{2n^2\times \sP}$, where $\Vert \cdot\Vert_F$ is the Frobenius norm.

To predict reconstructions in the testing process, we use linear regression with the selected principal components. Denote 
\begin{equation*}
\mathbf{\Psi}_\sP=   \begin{bmatrix} \mathbf{\Psi}_X\\ \mathbf{\Psi}_Y\end{bmatrix},
\end{equation*}
where $\mathbf{\Psi}_X, \mathbf{\Psi}_Y\in \mathbb{R}^{n^2\times \sP}$.
Given an matrix $X_{\text{test}}$ obtained from the first stage,  we flatten it into a vector $\mathbf{x}_{\text{test}} \in \mathbb{R}^{n^2}$, and compute the corresponding output $\mathbf{y}_{\text{test}}$ as
\begin{equation*}
 \mathbf{y}_{\text{test}} = \mathbf{\Psi}_Y\mathbf{\Psi}_X^\dagger \mathbf{x}_{\text{test}}.
\end{equation*}
where $\mathbf{\Psi}_X^\dagger$ denotes the pseudo-inverse of $\mathbf{\Psi}_X$. We finally obtain the prediction $Y_{\text{test}}$ by reshaping $\mathbf{y}_{\text{test}}$ into an $n \times n$ matrix.

\subsection{Dynamic mode decomposition}

The dynamic mode decomposition (DMD) is a powerful technique for dimensionality reduction of dynamic systems \cite{schmid2010dynamic}. By casting the mapping between the approximation and the exact statistical properties as a dynamical system, DMD can be leveraged to linearly approximate the inverse mapping for our problem.

We assume that the columns of data matrices $\mathbf{X}$ and $\mathbf{Y}$ obey a dynamical system of the form $\mathbf{y}_i= F(\mathbf{x}_i)$. The objective of DMD is to approximate this dynamic system using a linear operator $\mathbf{Y} = \mathbf{A}\mathbf{X}$. To this end, we employ the truncated SVD for matrix $\mathbf{X}$ to obtain a rank-$\sD$ low-dimensional approximation of the form $\mathbf{X}_\sD = \mathbf{U}_\sD \mathbf{\Sigma}_\sD \mathbf{V}_\sD^*$, where $\mathbf{U}_\sD\in \mathbb{R}^{n^2\times \sD}$, $\mathbf{V}_\sD\in\mathbb{R}^{M\times \sD}$, and $\mathbf{\Sigma}_\sD\in \mathbb{R}^{\sD \times \sD}$. The optimal linear transformation $\mathbf{A}_\sD$ is then computed from the reduced dimension-$\sD$ coefficient space as
$$ \mathbf{A}_\sD=  \mathbf{U}_\sD^*\mathbf{Y} \mathbf{V}_\sD \mathbf{\Sigma}_\sD^{-1}.    $$
Computing the eigendecomposition of matrix \(\mathbf{A}_\sD\) leads to an approximation of the spectrum of the underlying Koopman operator of the system.

To predict outputs using the linearized system, we take a matrix $X_{\text{test}}$ generated in the first stage and flatten it to obtain $\mathbf{x}_{\text{test}}$. Subsequently, the flattened prediction $\mathbf{y}_{\text{test}}$ is computed via
\begin{equation*}
\mathbf{y}_{\text{test}} =\mathbf{U}_\sD \mathbf{A}_\sD  \mathbf{U}_\sD^*\mathbf{x}_{\text{test}} =  \mathbf{Y} \mathbf{V}_\sD \mathbf{\Sigma}_\sD^{-1}  \mathbf{U}_\sD^*\mathbf{x}_{\text{test}}.
\end{equation*}
Finally, we obtain the prediction $Y_{\text{test}}$ by reshaping $\mathbf{y}_{\text{test}}$ into an $n \times n$ matrix.

\subsection{U-Net based network}

The disparity between the reconstructed profile from stage one, exhibiting a smooth image, and the true sharp profile of the source (as shown in Figure \ref{fig:compare}), presents an opportunity to leverage image segmentation tools for learning the mapping between them. In this regard, we test the applicability of the U-Net convolutional neural network \cite{ronneberger2015u}, which offers a unique encoder-decoder architecture that delivers highly accurate image segmentation. As shown in Figure \ref{Unet},  the U-Net approach preserves features of the input images with skip connections. The architecture of the neural network consists of a contracting path and an expansive path.  The contracting path involves sequentially applying a convolution, followed by a batch normalization layer and a ReLU activation function, to perform downsampling. The expansive path comprises of three components: an upsampling of the feature map, a batch normalization layer, and a ReLU activation function. The first component is followed by an up-convolution that halves the number of feature channels  and the third component is used to restore the image to the size of the input image. For each convolutional layer in the contracting path, the feature maps are passed to the corresponding layers in the expansive path via the skip connections.
 
 In the training process of the U-Net model $\chi$, we adopt the $L^1$ error as the loss function, i.e., we minimize the following functional over the parameter $\theta$ of the neural network model $\chi$:
\begin{equation*}
\mathcal{L}_{L^1}(\chi)= \mathbb{E} (\Vert Y-\chi(X;\theta)\Vert_{L^1}),
\end{equation*}
where $\mathbb{E}$ denotes the expectation of the underlying distribution from which the training dataset is sampled. In practice, the expectation is approximated by taking the average value of the loss function over a batch of training examples. Empirical results suggest that this choice of $L^1$ error yields more favorable performance as compared to the mean squared error.

\begin{remark}
If we consider the first stage of the overall method as a layer of the neural network architecture, it may be regarded as a deconvolution layer that incorporates a priori understanding of the physical characteristics.
\end{remark}

\begin{figure}
\centering
\includegraphics[width=0.9\textwidth]{ 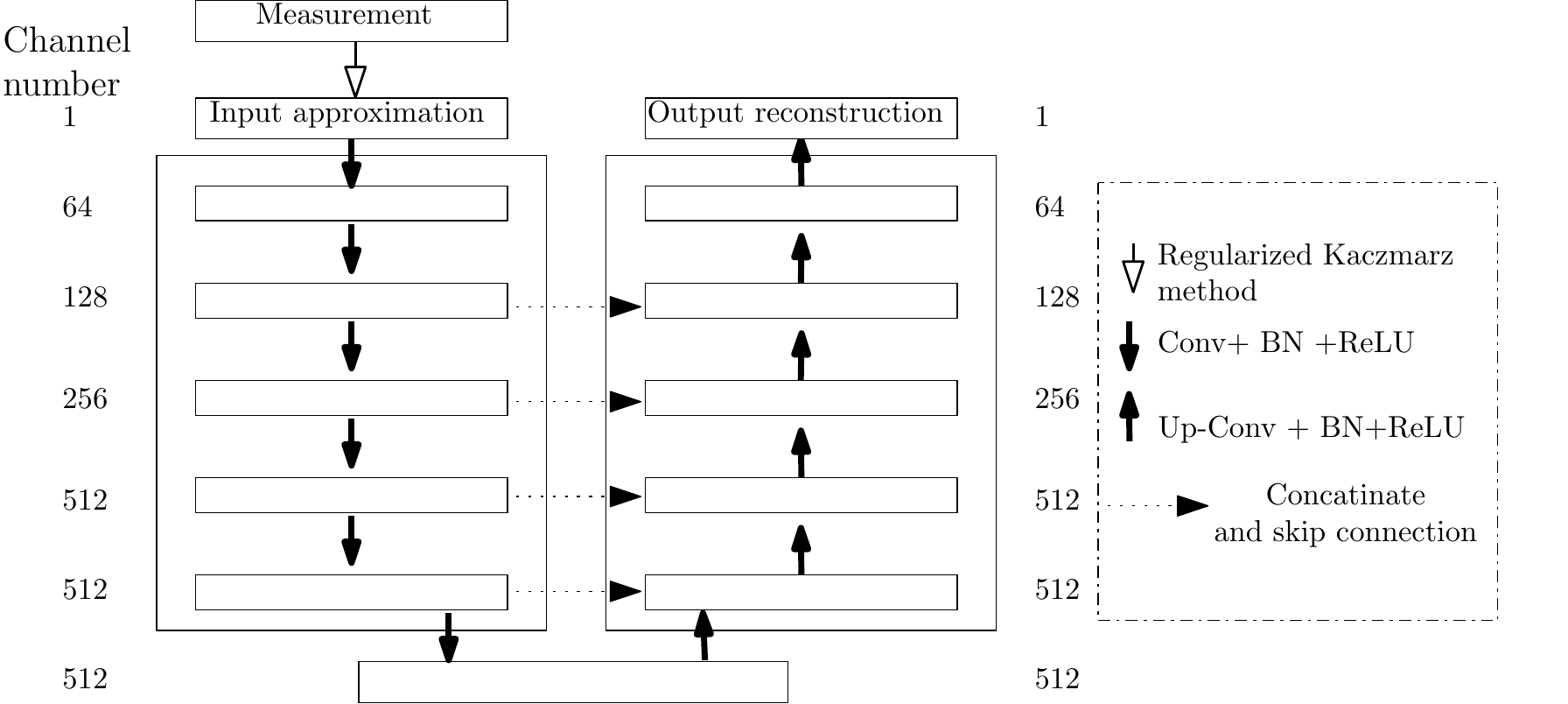}
\caption{The U-Net architecture.}\label{Unet}
\end{figure}

\subsection{Pix2pix neural network}

As an alternative to the U-Net model, we also evaluated the pix2pix neural network architecture. The pix2pix structure belongs to the family of conditional generative adversarial networks (cGANs), comprising a discriminator network $\D$ and a generator network $\G$, which cooperate in order to generate realistic reconstructions \cite{isola2017image}. 

In our experiments, the generator network maps an input image, $X_i\in \mathbb{R}^{n\times n}$, derived from the first stage, to an associated reconstruction, $\G(X_i) \in \mathbb{R}^{n\times n}$. On the other hand, the discriminator network takes in a pair of images, $(X, Y) \in \mathbb{R}^{n\times n} \times  \mathbb{R}^{n\times n}$, as input and produces a scalar-valued output ranging between 0 and 1. The two networks are jointly trained in a competitive manner, wherein the generator network endeavors to generate images $\G(X_i)$ that resemble the target images $Y_i$, while the discriminator network attempts to differentiate between real image pairs $(X_i, Y_i)$ and generated image pairs $(X_i, \G(X_i))$, with the output indicating the probability that an input image pair is real (i.e., it originates from the training dataset) or generated (i.e., it was produced by the generator network). The training process is illustrated in Figure \ref{pix2pix_diagram}.

The pix2pix algorithm utilizes a loss function consisting of two distinct components. The first component, denoted as the adversarial loss, is expressed in terms of a binary cross-entropy loss, and is represented as follows: 
\begin{equation*}
 \mathcal{L}_{\mathrm{cGAN}}(\G,\D) = \mathbb{E} (\log \D(X,Y)) + \mathbb{E}(\log(1-\D(X,\G(X)))).
 \end{equation*}
The second component of the loss function is referred to as the reconstruction loss, and is defined as follows: 
\begin{equation*}
\mathcal{L}_{L^1}(\G) = \mathbb{E} (\Vert Y-\G(X)\Vert_{L^1}).
\end{equation*}
The total loss is then given by
\begin{equation*}
 \mathcal{L}(\G,\D) = \mathcal{L}_{\mathrm{cGAN}}(\G,\D) + \lambda \mathcal{L}_{L^1} (\G),
\end{equation*}
where $\lambda$ represents a weight term that determines the relative importance of each component in the overall loss function. In practice, the expectations involved in the loss functions are approximated by sample means over batches of the training data. 
In the training process, we  solve the optimization problem
\begin{equation*}
\arg \min_{\theta_\G} \max_{\theta_\D} \mathcal{L}(\G, \D),
\end{equation*}
where $\theta_\G$ and $\theta_\D$ denote the parameters of the neural network models $\G$ and $\D$, respectively.

Following the approach proposed in \cite{isola2017image}, we employ the U-Net and patch-GAN architectures for the generator and discriminator networks respectively. This choice enables improved performance and efficiency in a variety of image-to-image translation tasks. 


\begin{figure}
\centering
\includegraphics[width=0.95\textwidth]{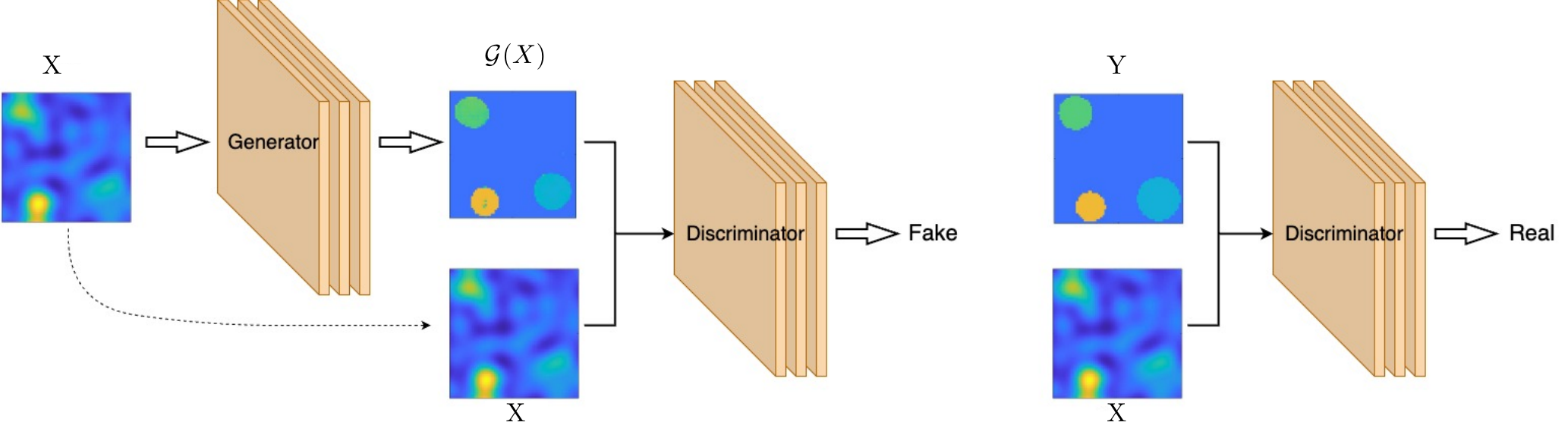}
\caption{The pix2pix algorithm. The generator $\G$ 
learns to fool the discriminator $\D$,  while the discriminator $\D$ learns to distinguish the fake $(X, \G(X))$ and real $(X,Y)$ tuples in the training set. }\label{pix2pix_diagram}
\end{figure}

\section{Numerical experiments}\label{Sec:5}

In this section, we present a comparative study of the data-assisted methods for the inverse random source problem.

\subsection{Preparation for training}

For the reconstruction of the mean function \( g \), the training dataset is generated as follows. The support of \( g \) is the union of three disks \( D_l \) with radius \( r_l \) and center \( (a_l, b_l) \), \( l=1,2,3 \). For any fixed \( l \), the radius \( r_l \) is generated from the uniform distribution on \( [0.2, 0.4] \). The coordinates \( a_l \) and \( b_l \) are generated from the uniform distribution in \( [-1+r_l, 1-r_l] \) so that the disk is randomly located but always contained in the rectangle \( D = [-1, 1] \times [-1, 1] \). Note that the disks are allowed to overlap. All the random numbers are generated independently from each other, with different but fixed seeds so that the results can be reproduced. The source term \( g \) is set to \( 0 \) in \( D \), and \( l \) in \( D_l \) for \( l=1,2,3 \) in a successive manner. The source term \( h \) is fixed as
\begin{equation}\label{eq:h}
  h(x) = 0.6 e^{-8 \left[ (x_1^2+x_2^2)^{1.5} - 0.75 (x_1^2+x_2^2) \right]}.
\end{equation}
The pseudocolor plot of the function \( h(x_1, x_2) \) is shown in Figure \ref{fig:parameters} (A).
\begin{figure}[ht!]
  \centering
  \begin{subfigure}[b]{0.3\textwidth}
    \centering
    \includegraphics[width=\textwidth]{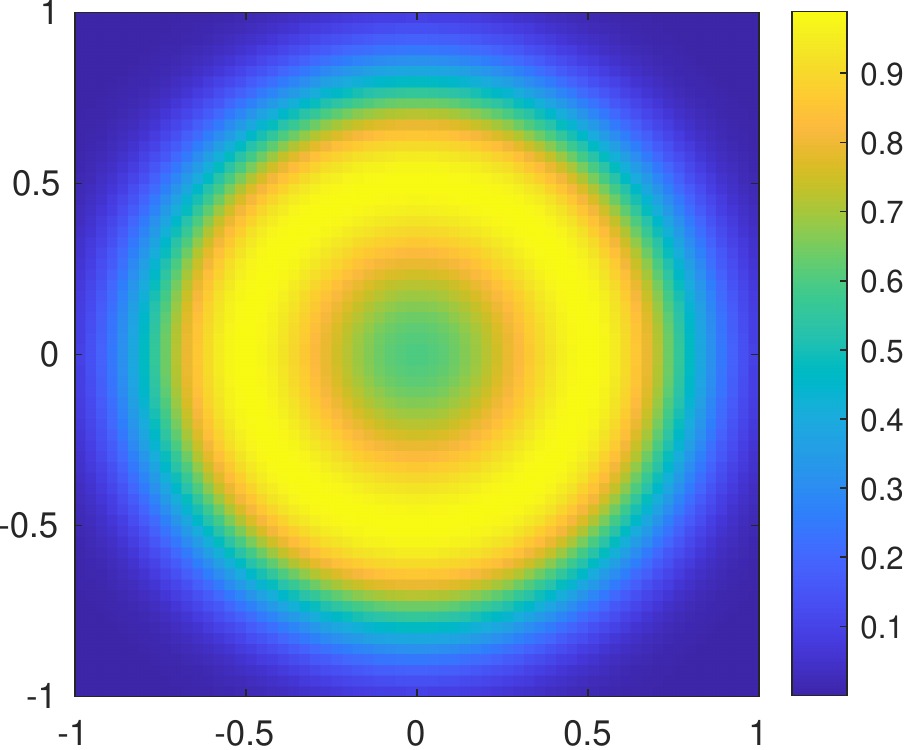}
    \caption{}
  \end{subfigure}
  \hfill
  \begin{subfigure}[b]{0.3\textwidth}
    \centering
    \includegraphics[width=\textwidth]{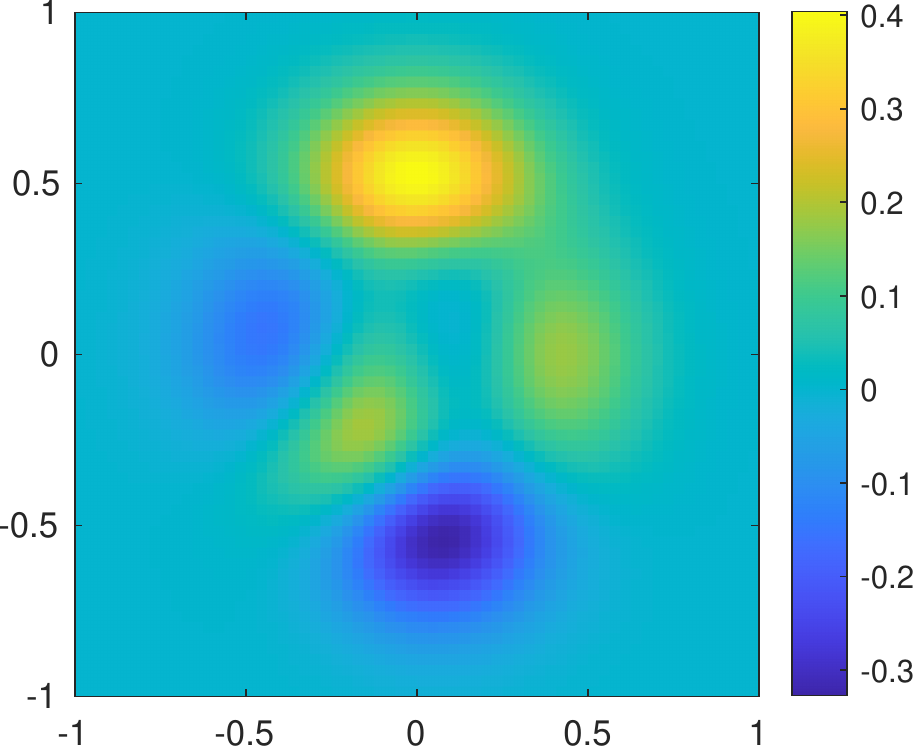}
    \caption{}
  \end{subfigure}
  \hfill
  \begin{subfigure}[b]{0.3\textwidth}
    \centering
    \includegraphics[width=\textwidth]{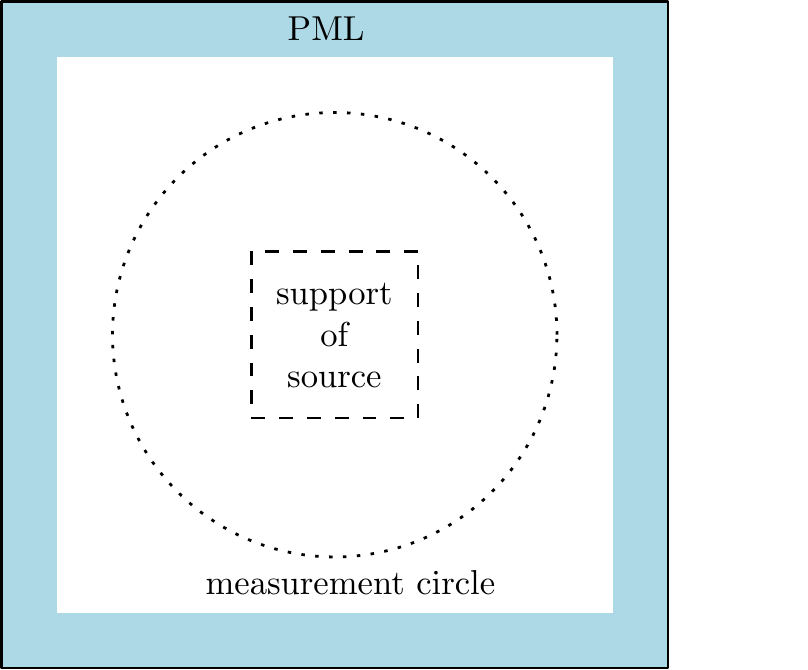}
    \caption{}
  \end{subfigure}
  \caption{(A) Pseudocolor plot of the function \( h \); (B) Pseudocolor plot of the function \( \eta \); (C) Computational domain for the direct problems.}
  \label{fig:parameters}
\end{figure}

To generate the training dataset for the reconstruction of the variance function $h$, we switch the roles of $g$ and $h$. More specifically, we consider the mean function $g$ to be fixed and given by \eqref{eq:h}. In this setting, we set $h$ to be equal to $0$ within the domain $D$, and assign the values of $1+\frac{l}{2}$ inside each of the three respective disks, $D_l$, in a stepwise manner for $l=1,2,3$. The disks $D_l$ are generated in accordance with the same method as that used for constructing the mean function $g$.

For experiments with an inhomogeneous medium, the function $\eta(x)$ is fixed as \( \eta(x) = q(3x_1, 3x_2) \), where
\[
  q(x) = 0.5 \left[ 0.3 (1 - x_1)^2 e^{ -x_1^2-(x_2+1)^2} - 
(0.2 x_1-x_1^3-x_2^5) e^{-x_1^2-x_2^2} - 0.03 e^{-(x_1+1)^2-x_2^2} \right].
\]
The pseudocolor plot of \( \eta(x) \) is shown in Figure \ref{fig:parameters} (B).

The infinite domain is truncated to a finite one using the technique of perfectly matched layer (PML). The computational domain is \( [-3, 3] \times [-3, 3] \) with a rectangular PML layer of thickness \( 0.5 \). The direct problem is solved by the finite element method (FEM) and the synthetic data is measured at \( 32 \) uniformly distributed points on the circle centered at the origin with radius \( 2 \). For any fixed source terms, the direct problem is solved for \( 5 \) wavenumbers \(\kappa_j = (0.5+j)\pi, j=0,1,2,3,4\). A schematic of the computational domain for the direct problems is shown in Figure \ref{fig:parameters} (C).

To generate random numbers, the GNU Scientific Library \cite{Galassi2002GNU} is used. Mesh generation is carried out using Gmsh \cite{Geuzaine2009Gmsh}, and the finite element method (FEM) is implemented by FreeFEM \cite{Hecht2012New}. To accelerate the computation of numerous direct problems, PETSc \cite{Balay1998PETSc} is employed to solve linear systems with multiple right-hand sides for different realizations. Moreover, Open MPI is utilized for parallel computing with varying source terms and wavenumbers.

\subsection{Stage one configuration}

The direct problem is solved for fixed source terms \( g \) and \( h \), with \( 5 \) distinct wavenumbers \(\kappa_j = (0.5+j)\pi, j=0,1,2,3,4\). The program is run for each fixed \(\kappa_j\) using either \(100\) realizations, for the reconstruction of the mean, or \(1000\) realizations, for the reconstruction of the variance. The resulting data on the measurement circle is processed using the Kaczmarz algorithm for a homogeneous \cite{bao2016inverse} medium to produce initial reconstructed images. The reconstructed images are sampled at a uniform grid size of \( 64 \times 64 \) within the range of \([-1, 1] \times [-1, 1]\). For regularization, the parameter \( \gamma \) is set to \( 10^{-8} \). The number of outer loops is taken to be \( 1 \).

\subsection{Stage two configuration}
For the PCA-based regression approach, the reduced dimension is set to \(\sP = 1000\) while for the DMD approach, the reduced dimension is set to  \(\sD = 100\). We implement the U-Net architecture comprising of convolutional layers of kernel size \( 4 \times 4 \) and batch normalization layers, following \cite{ioffe2015batch}. The weight $\lambda$ in the pix2pix algorithm is set to \(10\). All learning-based approaches are trained and tested using the PyTorch framework. The training process is accelerated using an Nvidia Tesla P100 GPU.

\subsection{Numerical experiments} 

We present numerical experiments in this section to compare the performance of different data-assisted methods. Unless otherwise specified, the testing dataset consists of $200$ randomly generated samples.
\begin{exam}
 In the first example, we assess the performance of the U-Net method in stage two with training datasets of varying sizes.
\end{exam}

The reconstruction quality of the samples in the testing set is measured by the averaged $L^1$ relative errors of all samples. The accuracy and training time of the U-Net method are reported in Table \ref{table1}. It is observed that as the size of the training dataset increases, the training time increases, and the relative error decreases. However, the accuracy does not improve significantly when the number of examples exceeds 1600. Therefore, we present results with a training dataset consisting of $M =1600$ samples in the subsequent examples to compare the performance of different approaches.

Figure \ref{fig.losshistory} displays the loss versus epochs for the U-Net based method, indicating that the loss decays slowly after 100 epochs. In the following examples, we present the neural networks trained with 100 epochs.
\begin{table}[t]
  \centering
  \caption{The training time and $L^1$ relative error for the U-Net based method with different numbers of training samples }\label{table1}
\begin{tabular}{c |c | c| c | c}
\hline
Size of training set &400& 800& 1600 &3200 \\
\hline
\hline
Time &34 & 51  &      92          &  130  \\
\hline
Relative error &0.24 &  0.21 &    0.20 & 0.19\\
\hline
\end{tabular}
\end{table}

\begin{figure}[t]
  \centering
  \includegraphics[width=0.5\textwidth]{ 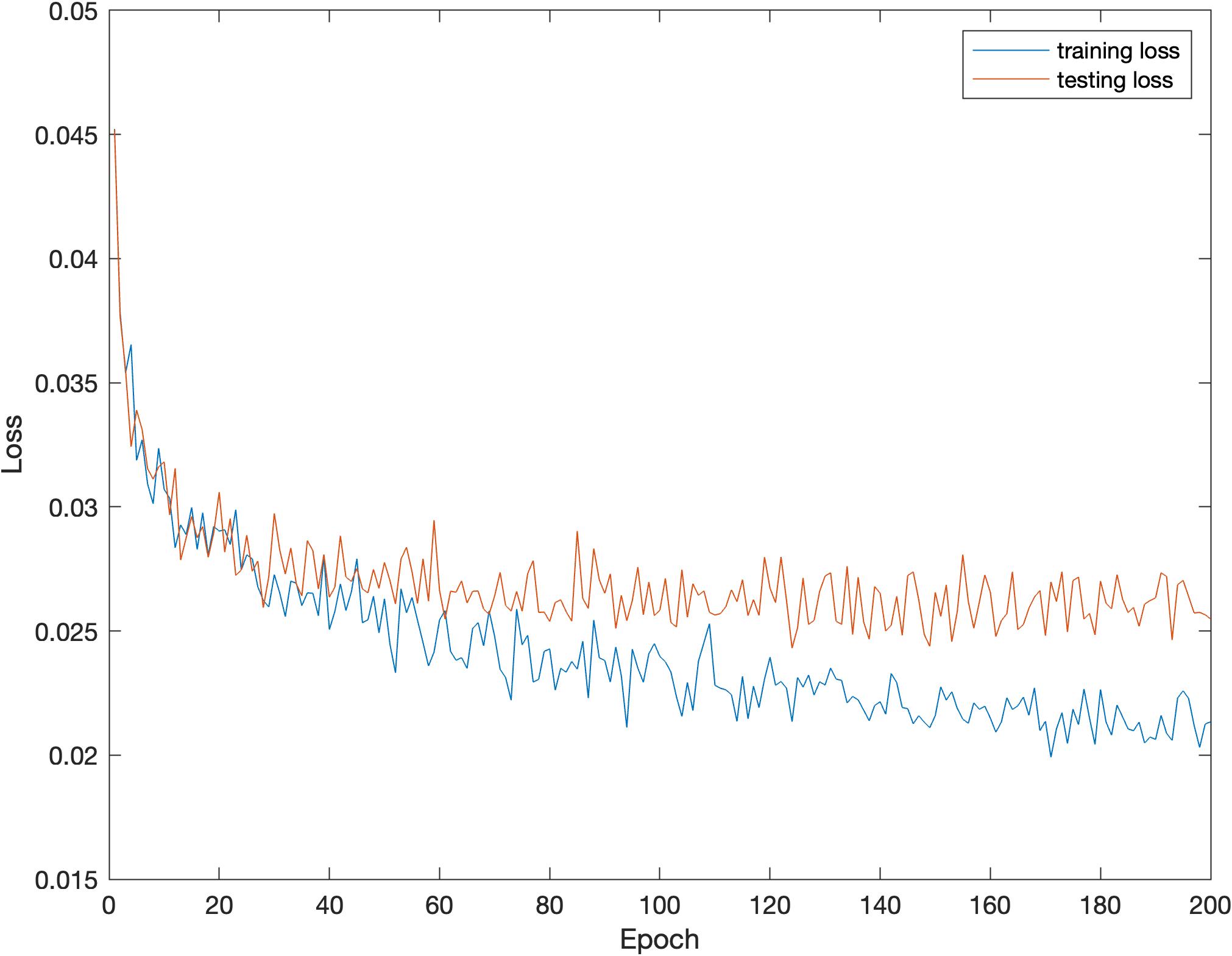} 
\caption{Loss versus epoches for Example 1.} \label{fig.losshistory} 
\end{figure}

\begin{exam}
In this example, we reconstruct the mean function $g$ and variance function $h$ of the random source when the medium is homogeneous.
\end{exam}

We  consider IP1 in this example and present representative pseudocolor plots of reconstructed images in Figures \ref{Ex2.main_mean} and \ref{Ex2.main_var}. Columns 1--2 display the ground-truth images and reconstructions from stage one (the Kaczmarz method), while columns 3--6 show reconstructions after stage two using PCA, DMD, U-Net, and pix2pix approaches.

 The $L^1$ relative error in the testing set is presented in Tables \ref{table2} and \ref{table3}. We observe that the Kaczmarz reconstructions capture the rough locations of the source, but not their boundary and amplitude. In stage two, the PCA and the DMD methods provide better reconstructions, but the background is still not clear and the boundary is blurry. With the two neural network algorithms U-Net and pix2pix, the reconstructions are noticeably enhanced and the homogeneous background is clearly separated from the inclusions. This improvement is also verified by the $L^1$ relative errors in Tables \ref{table2} and \ref{table3}. 
\begin{figure}[t]
\centering
\includegraphics[width=1\textwidth]{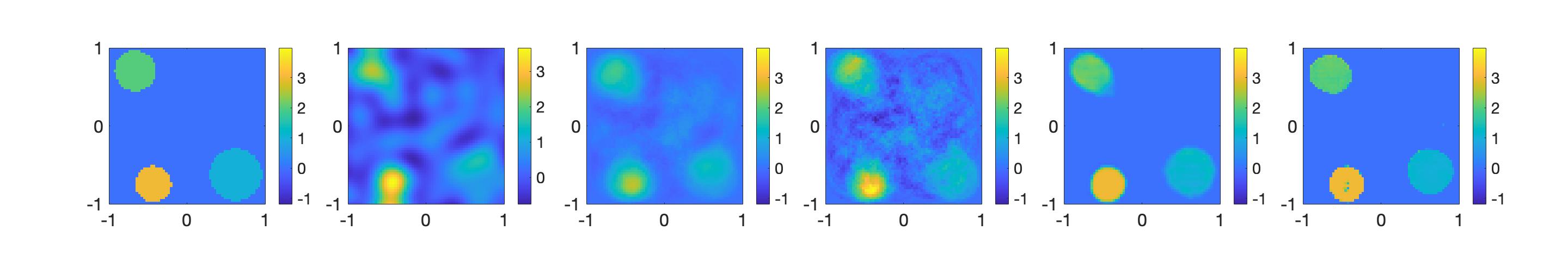} \vskip -1em 
\includegraphics[width=1\textwidth]{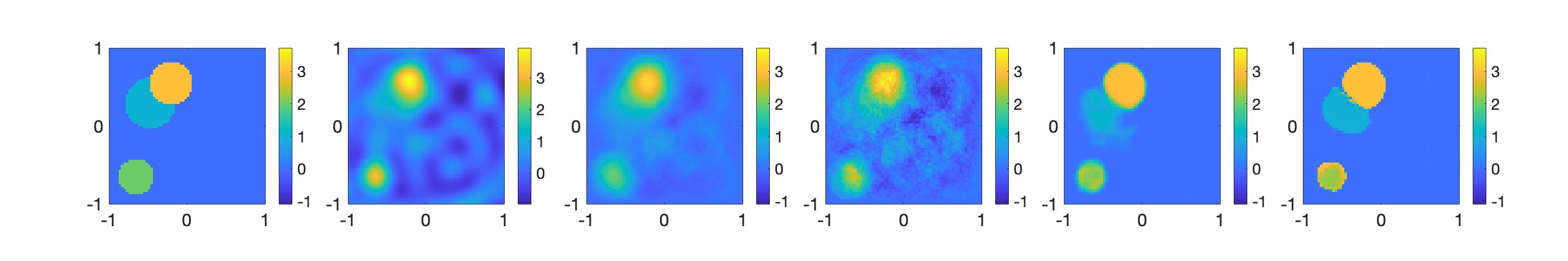} \vskip -1em 
\includegraphics[width=1\textwidth]{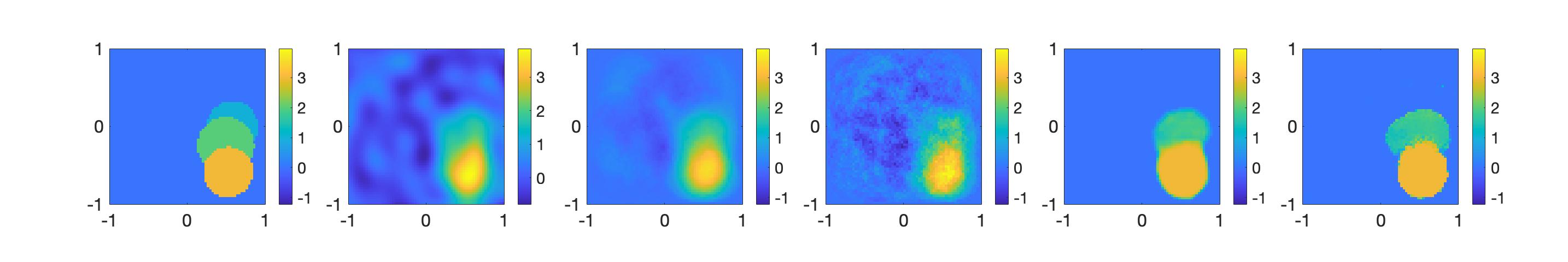} \vskip -1em 
\caption{Pseudocolor plots of the mean functions \( g \) and their reconstructions in a homogeneous medium. The columns from the left are the ground-truth images, the reconstructed images by the Kaczmarz method, PCA-based method, DMD-based method, U-Net based method, and pix2pix method.}
\label{Ex2.main_mean}
\end{figure}

\begin{figure}[t]
\centering
\includegraphics[width=1\textwidth]{ 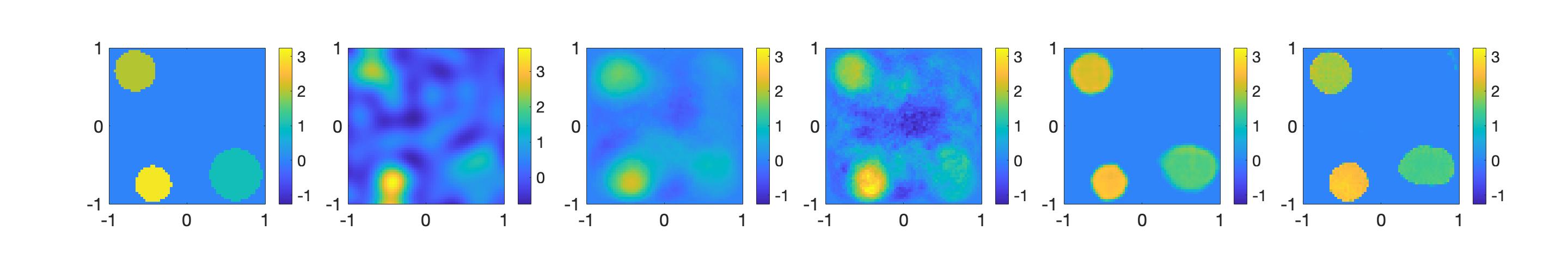} \vskip -1em 
\includegraphics[width=1\textwidth]{ 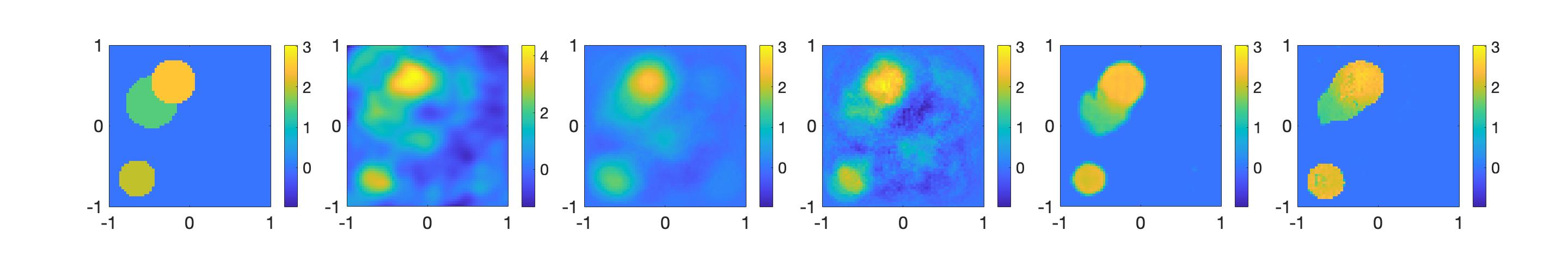} \vskip -1em 
\includegraphics[width=1\textwidth]{ 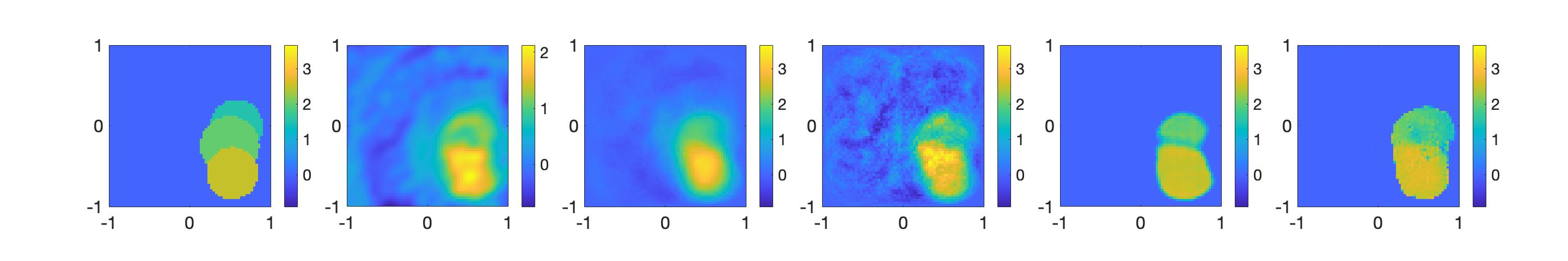} \vskip -1em 
\caption{Pseudocolor plots of the variance functions \( h \) and their reconstructions in a homogeneous medium. The columns from the left are the ground-truth images, the reconstructed images by the Kaczmarz method, PCA-based method, DMD-based method, U-Net based method, and pix2pix method.}
\label{Ex2.main_var}
\end{figure}

\begin{table}[t]
  \centering
  \caption{Training time and $L^1$ relative error for the reconstruction of the mean function \( g \) in a homogeneous medium. }\label{table2}
\begin{tabular}{c | c| c | c| c}

\hline
 Algorithm &  PCA  & DMD & U-Net &pix2pix \\
\hline
\hline
Time &  4 &  6 & 74 & 693 \\
\hline
Relative error & 0.62  &  0.63 & 0.20 & 0.22\\\hline
\end{tabular}
\end{table}

\begin{table}[t]
  \centering
  \caption{Training time and $L^1$ relative error for the reconstruction of the variance function \( h \) in a homogeneous medium. }\label{table3}
\begin{tabular}{c | c| c | c| c}
\hline
 Algorithm &  PCA  &  DMD &  U-Net  &pix2pix \\
\hline
\hline
Time &4  &6  & 74  &  729 \\
\hline
Relative error &0.90  & 0.77 &  0.28&  0.30\\
\hline
\end{tabular}
\end{table}

In the following three examples, the medium is inhomogeneous for both the training and testing datasets. It is worth noting that the refractive index $1 + \eta$ is solely used to solve the direct problem to generate the dataset, but is not available when implementing the two-stage method. Thus, the examples demonstrate the performance of the proposed method for both IP2 and IP3.

\begin{exam}
  In this example, we reconstruct the mean function \( g \) and variance function \( h \) of the random source when the medium is inhomogeneous.
\end{exam}

Figures \ref{Ex3.main_mean} and \ref{Ex3.main_var} present pseudocolor plots of some representative reconstructions. The reconstructions from the first stage are worse than the initial approximations in Example 2, since the Kaczmarz method is derived utilizing the integral equations corresponding to the direct problem with a homogeneous medium. In the subsequent stage, the PCA and DMD approaches provide enhanced reconstructions; however, the background is still unclear. The U-Net and pix2pix algorithms lead to much improved reconstructions with a clean background, sharp boundary, and correct amplitude. The improvements are also confirmed by the $L^1$ relative errors shown in Tables \ref{table4} and \ref{table5}, indicating that the hidden information of the inhomogeneity has been successfully captured by the learning methods.

\begin{figure}[t]
\centering
\includegraphics[width=1\textwidth]{ 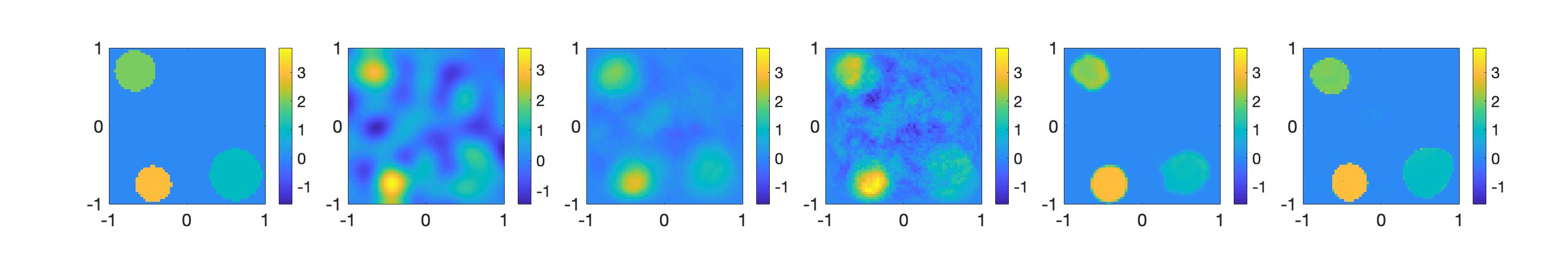} \vskip -1em 
\includegraphics[width=1\textwidth]{ 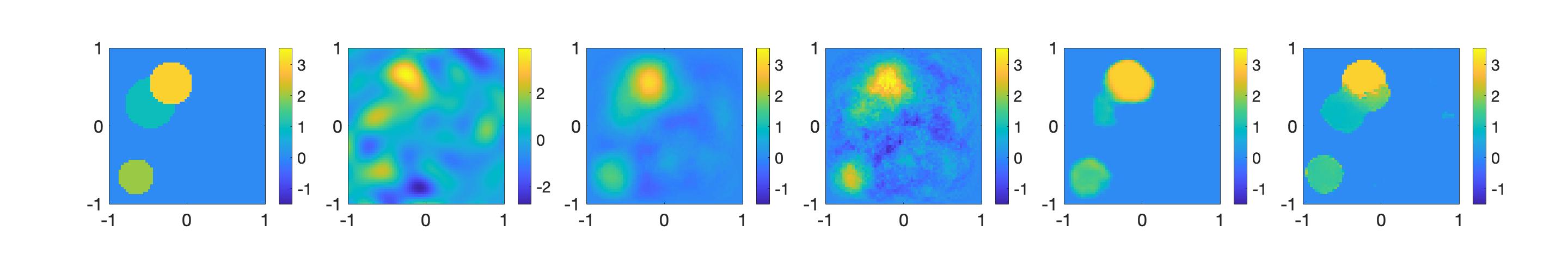} \vskip -1em 
\includegraphics[width=1\textwidth]{ 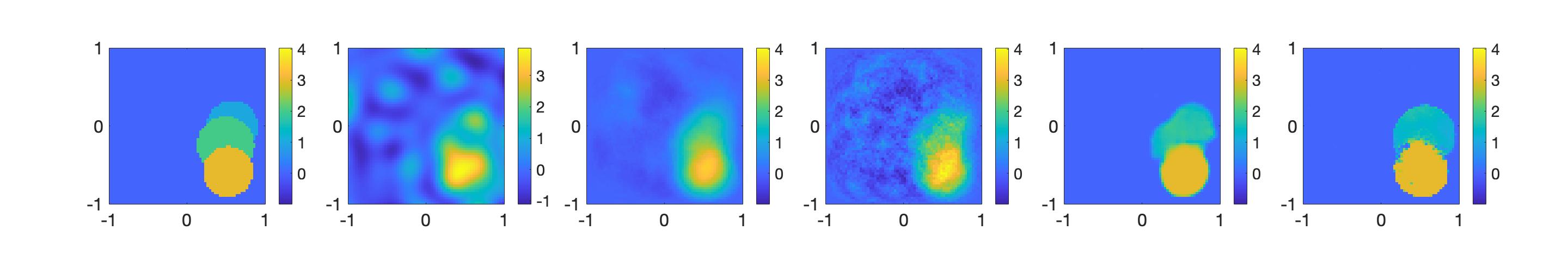} \vskip -1em 
\caption{Pseudocolor plots of the mean functions \( g \) and their reconstructions in an inhomogeneous medium. The columns from the left are the ground-truth images, the reconstructed images by the Kaczmarz method, PCA-based method, DMD-based method, U-Net based method, and pix2pix method.}
\label{Ex3.main_mean}
\end{figure}

\begin{figure}[t]
\centering
\includegraphics[width=1\textwidth]{ 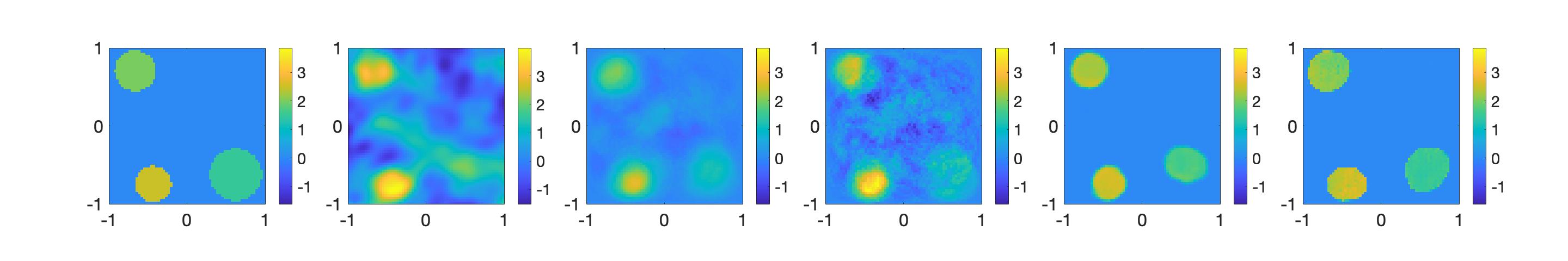} \vskip -1em 
\includegraphics[width=1\textwidth]{ 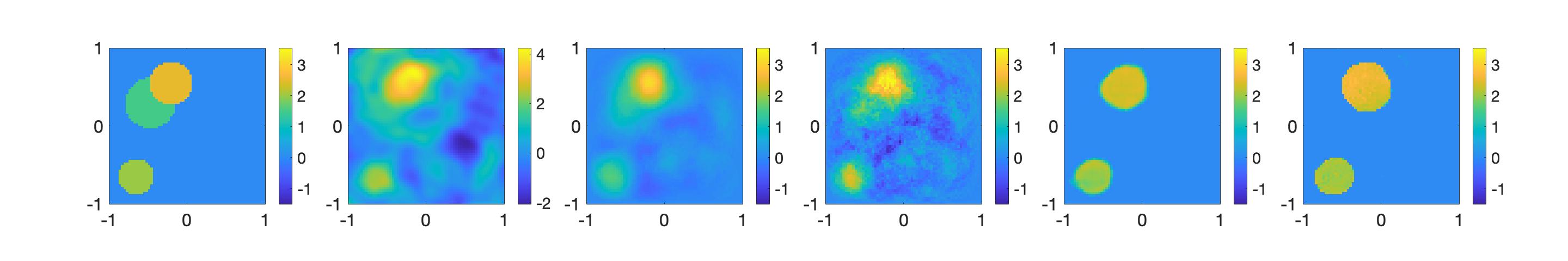} \vskip -1em 
\includegraphics[width=1\textwidth]{ 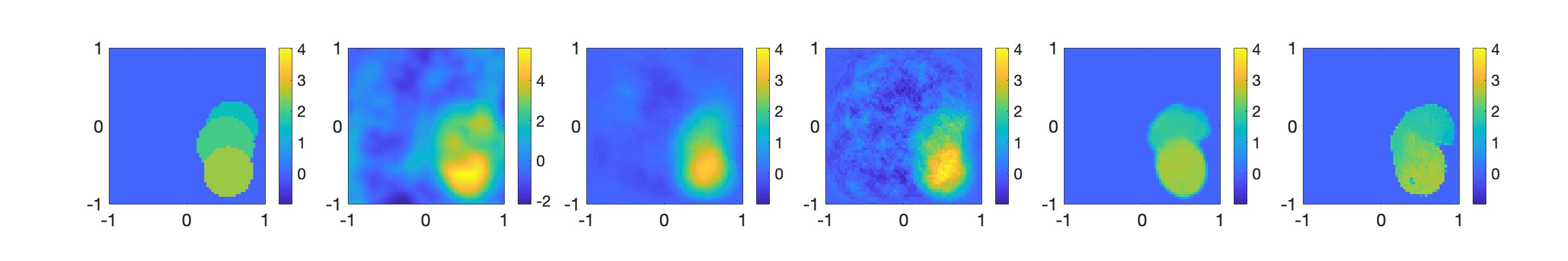} \vskip -1em 
\caption{Pseudocolor plots of the variance functions \( h \) and their reconstructions in an  inhomogeneous medium. The columns from the left are the ground-truth images, the reconstructed images by the Kaczmarz method, PCA-based method, DMD-based method, U-Net based method, and pix2pix method.}
\label{Ex3.main_var}
\end{figure}

\begin{table}[t]
  \centering
  \caption{Training time and $L^1$ relative error for the reconstruction of the mean function \( g \) in an inhomogeneous medium.}\label{table4}
\begin{tabular}{c | c| c | c| c}
\hline
 Algorithm & PCA  &  DMD&  U-Net  &pix2pix \\
\hline
\hline
Time &4 &6 &  79 &  697 \\
\hline
Relative error &0.74&0.64 & 0.27 &   0.28\\
\hline
\end{tabular}
\end{table}

\begin{table}[t]
  \centering
  \caption{Training time and $L^1$ relative error for the reconstruction of the variance function \( h \) in an inhomogeneous medium.}\label{table5}
\begin{tabular}{c | c| c | c| c}
\hline
 Algorithm & PCA  & DMD &  U-Net  &pix2pix \\
\hline
\hline
Time &4 & 6 &    104& 694 \\
\hline
Relative error &1.09 &0.75  &     0.30 & 0.36 \\
\hline
\end{tabular}
\end{table}

\begin{exam}
In this example, the methods are trained with noisy measurements to reconstruct the mean of the random source when medium is inhomogeneous.
\end{exam}

It can be observed from Table \ref{table6} that the reconstructions obtained through all the employed techniques exhibit considerable stability with respect to measurement noise. The next example is to address the generalization ability of our methods.

\begin{table}[t]
  \centering
  \caption{  $L^1$ relative error of  the mean reconstruction  with different methods trained with measurement data of different noise levels in an unknown inhomogeneous medium }\label{table6}
\begin{tabular}{c | c| c | c| c}
\hline
Noise level& PCA  & DMD &  U-Net  &pix2pix \\
\hline
\hline
 0.05 & 0.64 & 0.77 &  0.26  & 0.27 \\
 \hline
  0.1 & 0.64&  0.73&   0.27   & 0.27 \\
  \hline 
   0.2 & 0.65&  0.73&   0.27&  0.27\\
   \hline
\end{tabular}
\end{table}

\begin{exam}
In this example, we test the performance of the methods trained with the Cylinder dataset on several samples that are distinct from the training set. The medium is homogeneous and the mean reconstruction is presented. The first sample includes five inclusions and the second sample is a simplified phantom figure.
\end{exam}

Although the test samples differ from the samples in the training set, with more inclusions or inclusions of different shapes, the U-Net and pix2pix methods can still provide satisfactory reconstructions of well-separated inclusions with accurate positions, as shown in Figure \ref{Ex5.main_general}. All machine learning methods improve the reconstruction to varying extents. Compared to the other methods, the reconstructions of the pix2pix algorithm are less blurry, with the edges clearly reconstructed.

\begin{figure}[t]
\centering
\includegraphics[width=1\textwidth]{ 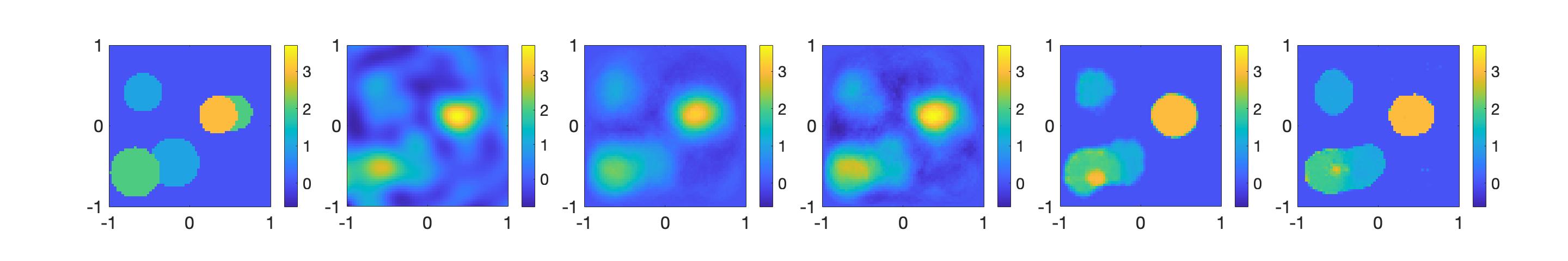} \vskip -1em 
\includegraphics[width=1\textwidth]{ 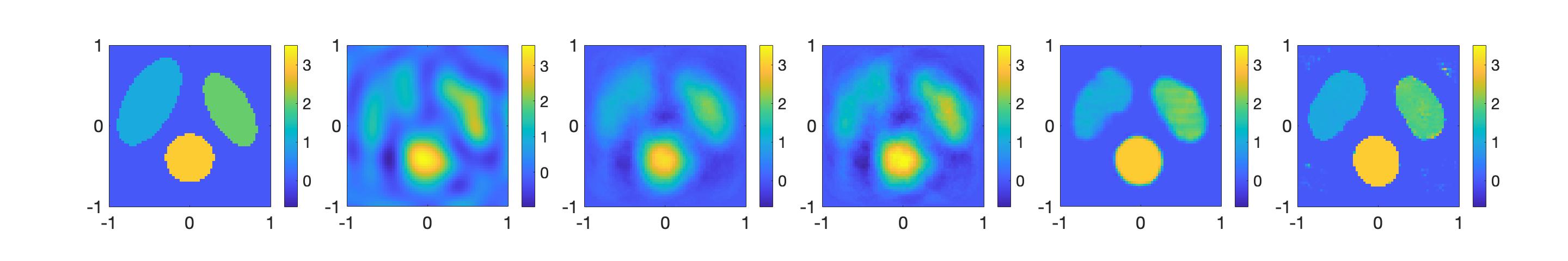} \vskip -1em 
\caption{Reconstructed mean of the random source of different shapes with the neural network trained with Cylinder dataset.  The columns from the left are the
ground-truth images and reconstructed images by the Kaczmarz method, PCA-based method, DMD-based method, U-Net based method, and pix2pix method.} \label{Ex5.main_general}
\end{figure}

\section{conclusion}

In this work, we have presented a novel approach to solving the inverse random source problem, a fundamental challenge in many scientific and engineering applications. Our two-stage method leverages data-assisted techniques to provide accurate and efficient solutions. In the first stage, we utilize the regularized Kaczmarz algorithm to obtain an approximation of the statistical properties of the random source based on integral equations. This approximation serves as the input to the second stage, where data-assisted methods are used to learn the mapping from the approximations to the exact mean and variance of the source.

To evaluate the effectiveness of our approach, we conduct a comparative study of different data-assisted approaches. The results demonstrate that neural network-assisted techniques outperform other methods, offering stable and accurate reconstructions with a relatively small number of realizations for both homogeneous and inhomogeneous media.


\begin{thebibliography}{}

\bibitem{adler2017solving}
J. Adler and O. \"Oktem,  Solving ill-posed inverse problems using iterative deep neural networks, Inverse problems, 33 (2017), 124007.

\bibitem{arridge2019solving}
S. Arridge, P. Maass,  O. {\"O}ktem, and C. Sch{\"o}nlieb, Solving inverse problems using data-driven models, Acta Numerica,  28(2019),  1--174. 

\bibitem{badieirostami2010wiener}
 M. Badieirostami, A. Adibi, H. Zhou, and S.-N. Chow, Wiener chaos expansion and simulation of electromagnetic wave propagation excited by a spatially incoherent source,  Multiscale Model Simul., 8 (2010), 59--604.
 
   \bibitem{Balay1998PETSc}
  S. Balay, W. Gropp, L. C. McInnes, and B. F. Smith, PETSc: Portable, Extensible Toolkit for Scientific Computation (Encyclopedia of Parallel Computing), Springer, Boston, 2011.


\bibitem{bao2016inverse}
G. Bao, C. Chen, and P. Li, Inverse random source scattering problems in several dimensions, SIAM-ASA J. Uncertain. Quantif., 4 (2016), 1263--1287.

\bibitem{bao2017inverse}
G. Bao, C. Chen, and P. Li, Inverse random source scattering for elastic waves,  SIAM J. Numer. Anal., 55 (2017),  2616--2643.

 \bibitem{bao2014inverse}
 G. Bao, S.-N. Chow, P. Li, and H. Zhou, An inverse random source problem for the Helmholtz equation, Math. Comp., 83 (2014), 215--233.

\bibitem{bao2015multi}
G. Bao, P. Li, J. Lin, and F. Triki, Inverse scattering problems with multi-frequencies, Inverse Problems, 31 (2015), 093001. 

\bibitem{bhattacharya2021model}
 K. Bhattacharya,  B. Hosseini, N. B. Kovachki, and A. M. Stuart,  Model reduction and neural networks for parametric PDEs, arXiv: 2005.03180, 2020.

\bibitem{bleistein1977nonuniqueness}
N. Bleistein and J. K. Cohen, Nonuniqueness in the inverse source problem in acoustics and electromagnetics, J. Math. Phys., 18 (1977), 194--201.

\bibitem{bongard2007automated}
J. Bongard and H. Lipson, Automated reverse engineering of nonlinear dynamical systems, Proc. Natl. Acad. Sci. U.S.A., 104 (2007), 9943--9948.

\bibitem{chen2020physics}
Y. Chen, L. Lu, G. E. Karniadakis, and L. D.  Negro,  Physics-informed neural networks for inverse problems
in nano-optics and metamaterials, Opt. Express, 28 (2020), 1161--11633.

\bibitem{chen2020review}
X. Chen, Z. Wei, M. Li, and P. Rocca, A review of deep learning approaches for inverse scattering problems (invited review), Prog. Electromagn. Res., 167 (2020), 67--81.


\bibitem{devaney1982nonuniqueness}
A. Devaney and G. Sherman, Nonuniqueness in inverse source and scattering problems,  IEEE Trans. Antennas and Propagation, 30 (1982), 1034--1037.

\bibitem{elfving2014semi}
T. Elfving, P. C. Hansen, and T. Nikazad, Semi-convergence properties of Kaczmarz’s method, Inverse problems,
30 (2014), 055007.

 \bibitem{fouque2007wave}
 J. P. Fouque, J. Garnier,  G.  Papanicolaou, and K.  Solna,  Wave Propagation and Time Reversal in Randomly Layered Media, Springer, New York,  2007.

\bibitem{Galassi2002GNU}
  M. Galassi, J. Davies, J. Theiler, B. Gough, G. Jungman, P. Alken, M. Booth, F. Rossi, and R. Ulerich, GNU Scientific Library,  Network Theory Ltd, Godalming, 2002.

\bibitem{gao2022artificial}
Y. Gao, H. Liu, X. Wang, and K. Zhang, On an artificial neural network for inverse scattering problems,  J. Comput. Phys., 448 (2022), 110771.

\bibitem{Geuzaine2009Gmsh}
  C. Geuzaine and J. F. Remacle, Gmsh: A 3-D finite element mesh generator with built-in pre-and post-processing facilities, Internat. J. Numer. Methods Engrg., 79 (2009), 1309--1331.
  
  \bibitem{guo2021construct}
R. Guo and J. Jiang, Construct deep neural networks based on direct sampling methods for solving electrical impedance tomography, SIAM J. Sci. Comput., 43 (2021), B678--B711.

\bibitem{Hecht2012New}
F. Hecht, New development in FreeFem++, J. Numer. Math., 20 (2012), 251--266.

 \bibitem{helin2017inverse}
 T. Helin, M. Lassas, and L. P\"aiv\"arinta, Inverse acoustic scattering problem in half-space with anisotropic random
impedance,  J. Differential Equations, 262 (2017),  3139--3168.
 
 
 \bibitem{hotelling1933analysis}
 H. Hotelling, Analysis of a complex of statistical variables into principal components, J. Edu. Psychol, , 24 (1933), 417.
 
 
 
\bibitem{ioffe2015batch}
S. Ioffe and C. Szegedy,  Batch normalization: Accelerating deep network training by reducing internal covariate shift, in Proceedings of the International Conference on Machine Learning, 2015, 448--456.
  
\bibitem{isola2017image}
P. Isola, J. Zhu, T. Zhou, and A. A.  Efros, Image-to-Image translation with conditional adversarial networks,  in Proceedings of the IEEE Conference on Computer Vision and Pattern Recognition, 2017, 1125--1134.

\bibitem{khoo2019switchnet}
Y. Khoo and L. Ying, SwitchNet: a neural network model for forward and inverse scattering problems, SIAM J. Sci. Comput., 41 (2019), A3182--A3201.

 \bibitem{li2017inverse}
 M. Li, C. Chen, and P. Li, Inverse random source scattering for the Helmholtz equation in inhomogeneous
media, Inverse Problems, 34 (2017), 015003.

 \bibitem{li2022inverse}
J. Li, P.  Li, and X. Wang, Inverse source problems for the stochastic wave equations: far-field patterns,  SIAM J. Appl. Math., 82 (2022), 1113--1134.

\bibitem{li2020inverse}
P. Li and X. Wang, An inverse random source problem for the one-dimensional Helmholtz equation with attenuation,
Inverse Problems, 37 (2020), 015009.

 \bibitem{li2017stability}
P. Li and G. Yuan, Stability on the inverse random source scattering problem for the one-dimensional Helmholtz
equation, J. Math. Anal. Appl.,  450 (2017), 872--887. 

\bibitem{maarten2022deep}
M. V. de Hoop, M. Lassas, and C. A. Christopher, Deep learning architectures for nonlinear operator functions and nonlinear inverse problems, Math. Stat. Learn., 4 (2022), 1--86.
 
\bibitem{marengo1999inverse}
E. A. Marengo and A. J. Devaney, The inverse source problem of electromagnetics: linear inversion formulation and minimum energy solution, IEEE Trans. Antennas and Propagation, 47 (1999), 410--412.

\bibitem{natterer86}
F. Natterer, The Mathematics of Computerized Tomography, Teubner, Stuttgart, 1986. 

\bibitem{ronneberger2015u}
O. Ronneberger, P. Fischer, and T. Brox, U-net: Convolutional networks for biomedical image segmentation, 
 in  Proceedings of the  International  Conference on  Medical  Image Computing and Computer-Assisted Intervention,  2015, 234--241.

\bibitem{schmid2010dynamic}
P. J. Schmid, Dynamic mode decomposition of numerical and experimental data, J. Fluid Mech., 656 (2010), 5--28.

\bibitem{schmidt2009distilling}
M. Schmidt and H. Lipson, Distilling free-form natural laws from experimental data, Science, 324 (2009), 81--85.

\bibitem{uhlmann2020convolutional}
G. Uhlmann and Y. Wang, Convolutional neural networks in phase space and inverse problems, SIAM J. Appl. Math., 80 (2020), 2560--2585.

\bibitem{xu2020deep}
K. Xu, L. Wu, X. Ye, and X. Chen, Deep learning-based inversion methods for solving inverse scattering problems with phaseless data, IEEE Trans. Antennas and Propagation, 68 (2020), 7457--7570.

 \bibitem{yu2018deep}
 B. Yu and  W. E,  The deep Ritz method: a deep learning-based numerical algorithm for solving variational problems, 
Commun. Math. Stat., 6 (2018), 1--12.
 

 \bibitem{zang2020weak}
 Y. Zang, G. Bao, X. Ye, and H. Zhou, Weak adversarial networks for high-dimensional partial differential equations, 
J. Comput. Phys., 411 (2020), 109409.


\bibitem{zhou2020improved}
Y. Zhou, Y. Zhong, Z. Wei, T. Yin, and X. Chen, An improved deep learning scheme for solving 2-D and 3-D inverse scattering problems, IEEE Trans. Antennas and Propagation, 69 (2020), 2853--2863.

\bibitem{zhu2018bayesian}
Y. Zhu and N. Zabaras, Bayesian deep convolutional encoder-decoder networks for surrogate modeling and uncertainty quantification, J. Comput. Phys. , 366 (2018), 415--447.


%
%
%
%
%


\end{thebibliography}
\end{document}